\definecolor{verylight}{gray}{0.97}
\definecolor{light}{gray}{0.93}
\definecolor{medium}{gray}{0.82}
 \def\NZQ{\Bbb}               
 \def\CC{{\NZQ C}}
 \def\DD{{\NZQ D}}
 \def\FF{{\NZQ F}}
 \def\GG{{\NZQ G}}
 \def\C{{\mathcal C}}
 \def\P{{\mathcal P}}
 \def\ab{{\bold a}}
 \def\bb{{\bold b}}
 \def\xb{{\bold x}}
 \def\cb{{\bold c}}
 \def\db{{\bold d}}
 \def\opn#1#2{\def#1{\operatorname{#2}}} 
 \opn\chara{char} \opn\length{\ell} \opn\pd{pd} \opn\rk{rk}
 \opn\projdim{proj\,dim} \opn\injdim{inj\,dim} \opn\rank{rank}
 \opn\depth{depth} \opn\grade{grade} \opn\height{height}
 \opn\embdim{emb\,dim} \opn\codim{codim}
 \opn\Tr{Tr} \opn\bigrank{big\,rank}
 \opn\superheight{superheight}\opn\lcm{lcm}
 \opn\trdeg{tr\,deg}
 \opn\reg{reg} \opn\lreg{lreg} \opn\ini{in} \opn\lpd{lpd}
 \opn\size{size} \opn\sdepth{sdepth}
 \opn\link{link}\opn\fdepth{fdepth}\opn\lex{lex}
 \opn\div{div} \opn\Div{Div} \opn\cl{cl} \opn\Cl{Cl}
 \opn\Spec{Spec} \opn\Supp{Supp} \opn\supp{supp} \opn\Sing{Sing}
 \opn\Ass{Ass} \opn\Min{Min}\opn\Mon{Mon}
 \opn\Ann{Ann} \opn\Rad{Rad} \opn\Soc{Soc}
 \opn\Im{Im} \opn\Ker{Ker} \opn\Coker{Coker} \opn\Am{Am}
 \opn\Hom{Hom} \opn\Tor{Tor} \opn\Ext{Ext} \opn\End{End}
 \opn\Aut{Aut} \opn\id{id}
 \opn\nat{nat}
 \opn\pff{pf}
 \opn\Pf{Pf} \opn\GL{GL} \opn\SL{SL} \opn\mod{mod} \opn\ord{ord}
 \opn\Gin{Gin} \opn\Hilb{Hilb}\opn\sort{sort}
 \opn\aff{aff} \opn
\opn\relint{relint} \opn\st{st}
 \opn\lk{lk} \opn\cn{cn} \opn\core{core} \opn\vol{vol}
 \opn\link{link} \opn\star{star}\opn\lex{lex}\opn\set{set}
 \opn\gr{gr}
 \def\pot#1#2{#1[\kern-0.28ex[#2]\kern-0.28ex]}
 \opn\dirlim{\underrightarrow{\lim}}
 \opn\inivlim{\underleftarrow{\lim}}
 \let\union=\cup
 \let\sect=\cap
 \let\iso=\cong
 \let\Dirsum=\bigoplus
 \let\to=\rightarrow
 \let\To=\longrightarrow
 \def\Implies{\ifmmode\Longrightarrow \else
         \unskip${}\Longrightarrow{}$\ignorespaces\fi}
 \def\implies{\ifmmode\Rightarrow \else
         \unskip${}\Rightarrow{}$\ignorespaces\fi}
 \def\iff{\ifmmode\Longleftrightarrow \else
         \unskip${}\Longleftrightarrow{}$\ignorespaces\fi}
 \newtheorem{Theorem}{Theorem}[section]
 \newtheorem{Lemma}[Theorem]{Lemma}
 \newtheorem{Corollary}[Theorem]{Corollary}
 \newtheorem{Proposition}[Theorem]{Proposition}
 \newtheorem{Remark}[Theorem]{Remark}
 \let\epsilon\varepsilon
 \let\kappa=\varkappa
 \def\qed{\ifhmode\textqed\fi
       \ifmmode\ifinner\quad\qedsymbol\else\dispqed\fi\fi}
 \def\textqed{\unskip\nobreak\penalty50
        \hskip2em\hbox{}\nobreak\hfil\qedsymbol
        \parfillskip=0pt \finalhyphendemerits=0}
 \def\dispqed{\rlap{\qquad\qedsymbol}}
 \opn\dis{dis}
 \def\pnt{{\raise0.5mm\hbox{\large\bf.}}}
 \def\lpnt{{\hbox{\large\bf.}}}
 \opn\Lex{Lex}
\begin{document}

 \title{Expansions of monomial ideals and multigraded modules}

  \author {Shamila Bayati}

\address{Faculty of Mathematics and Computer Science\\
Amirkabir University of Technology (Tehran Polytechnic)\\ 424 Hafez Ave.\\ Tehran
15914\\ Iran
}\email{shamilabayati@gmail.com}

\author {J\"urgen Herzog}

\address{Fachbereich Mathematik\\ Universit\"at Duisburg-Essen\\ Campus Essen\\ 45117
Essen\\ Germany} \email{juergen.herzog@uni-essen.de}

 \begin{abstract}
We introduce an exact functor defined on multigraded modules which we call the expansion functor and study its homological properties. The expansion functor applied to a monomial ideal amounts to substitute the variables by monomial prime ideals and to apply this substitution to the generators of the ideal. This operation naturally occurs in various combinatorial contexts.
 \end{abstract}

\subjclass[2010]{13C13, 13D02}
\keywords{ Expansion functor, free resolution, graded Betti numbers, monomial ideals.}

 \maketitle

 \section*{Introduction}
 In this paper we first study an operator defined on monomial ideals and their multigraded  free resolution which we call  the {\em expansion operator}. The definition of this operator is motivated by constructions in various combinatorial contexts. For example, let $G$ be a finite simple graph with vertex set $V(G)=[n]$  and edge set $E(G)$, and let  $I(G)$ be its edge ideal in $S=K[x_1,\ldots,x_n]$.  We fix a vertex $j$ of $G$. Then a new graph $G'$ is defined by  duplicating $j$, that is, $V(G') =V(G)\union\{j'\}$ and
 \[
 E(G')=E(G)\union\{\{i,j'\}\:\; \{i,j\}\in E(G)\}
 \]
 where $j'$ is new vertex. It follows that $I(G')=I(G)+(x_ix_{j'}\:\; \{i,j\}\in E(G))$. This duplication can be iterated. We denote by $G^{(i_1,\ldots,i_n)}$ the graph which is obtained from $G$ by $i_j$ duplications of  $j$. Then edge  ideal of  $G^{(i_1,\ldots,i_n)}$ can be described as follows: let  $S^*$ be the polynomial ring over $K$ in the variables
 \[
 x_{11},\ldots,x_{1i_1}, x_{21},\ldots,x_{2i_2},\ldots,  x_{n1},\ldots,x_{ni_n},
 \]
and  consider the monomial prime ideal $P_j=(x_{j1},\ldots,x_{ji_j})$  in $S^*$. Then
\[
I(G^{(i_1,\ldots,i_n)})= \sum_{\{v_i,v_j\}\in E(G)}P_iP_j.
\]
We say the ideal $I(G^{(i_1,\ldots,i_n)})$ is obtained from $I(G)$ by expansion with respect to the $n$-tuple  $(i_1,\ldots,i_n)$ with positive integer entries.

More generally, let $I=(\xb^{\ab_1},\ldots,\xb^{\ab_m})\subset S$ be any monomial ideal and   $(i_1,\ldots,i_n)$  an $n$-tuple with positive integer entries. Then we define the {\em expansion of $I$
with respect to $(i_1,\ldots,i_n)$}  as the monomial ideal
\[
I^*=\sum_{i=1}^m P_1^{a_i(1)}\cdots P_n^{a_i(n)}\subseteq S^*
\]
where $\ab_i=(a_i(1),\ldots , a_i(n))$.  A similar construction by a sequence of duplications of vertices of a simplicial complex is applied in \cite{HHTZ} to derive an  equivalent condition for vertex cover algebras of simplicial complexes  to be standard graded. In this case the cover ideal of the constructed simplicial complex is an expansion of the cover ideal of the original  one. There is still another instance, known to us, where expansions of monomial ideals naturally appear. Indeed, let $\mathcal{M}$ be any vector matroid on the ground set $E$  with set of bases $\mathcal{B}$, that is, $E$ is a finite subset of nonzero vectors of a vector space $V$ over a field $F$ and $\mathcal{B}$ is the set of maximal linearly independent subsets of $E$. We define on $E$ the following equivalence relation: for $v,w\in E$ we set $v\sim w$ if and only if $v$ and $w$ are linearly dependent. Next we choose  one representative of each  equivalence class, say, $v_1,\ldots,v_r$. Let $\mathcal{M}'$  be the vector  matroid   on the ground set $E'=\{v_1,\ldots,v_r\}$. Then, obviously, the matroidal ideal attached to $\mathcal{M}$ is an expansion of the matroidal ideal attached to $\mathcal{M}'$.

In the first section  we present the basic properties of the expansion operator, and show  this operator commutes  with the standard algebraic operations on ideals; see Lemma~\ref{operations} and Corollary~\ref{symbolic-power}.   It also commutes with primary decompositions of monomial ideals as shown in Proposition~\ref{primarydecom}. As a consequence one obtains that $\Ass(S^*/I^*)=\{P^*\:\; P\in \Ass(S/I)\}$. Also it is not so hard to see that  $I^*$ has linear quotients if $I$ has linear quotients. In view of this fact it is natural to ask whether $I^*$ has a linear resolution if $I$ has a linear resolution. This question has a positive answer, as it is shown in the following sections where we study the homological properties of $I^*$.

It turns out, as shown in Section 2, that the expansion operator can be made an exact  functor from  the category of finitely generated multigraded $S$-modules to the category  of finitely generated multigraded $S^*$-modules. Let $M$ be a finitely generated multigraded $S$-module. Applying this functor to a multigraded free resolution $\FF$ of $M$ we obtain an acyclic complex $\FF^*$ with $H_0(\FF^*)\iso M^*$. Unfortunately, $\FF^*$  is not a free resolution of $M^*$ because the expansion functor applied to a free module doesn't necessarily  yield a free module. To remedy this problem we  construct in Section 3, starting from $\FF^*$, a double complex which provides a minimal multigraded free resolution of $M^*$.  In the last section we use this construction to show that $M$ and  $M^*$ have the same regularity. 
 We also compute the graded Betti numbers of $M^*$ in terms of multigraded free resolution of $M$.

 \section{Definition and basic properties of the expansion operator}
In this section we define the expansion operator on monomial ideals and compare some algebraic properties of the constructed ideal with the original one.

 Let $K$ be a field and $S=K[x_1,\ldots,x_n]$  the polynomial ring over $K$ in the variables $x_1,\ldots,x_n$. Fix an ordered $n$-tuple $(i_1,\ldots,i_n)$ of positive integers, and  consider the polynomial ring $S^{(i_1,\ldots,i_n)}$ over $K$ in the variables
 $$x_{11},\ldots,x_{1i_1},x_{21},\ldots,x_{2i_2},\ldots,x_{n1},\ldots,x_{ni_n}.$$
 Let $P_j$ be the monomial prime ideal $(x_{j1},x_{j2},\ldots,x_{ji_j}) \subseteq S^{(i_1,\ldots,i_n)}$. Attached to each monomial ideal $I$ with a set of monomial generators $\{\xb^{\ab_1},\ldots,\xb^{\ab_m}\}$, we define the {\em expansion of $I$ with respect to the $n$-tuple $(i_1,\ldots,i_n)$}, denoted by $I^{(i_1,\ldots,i_n)}$, to be the monomial ideal $$I^{(i_1,\ldots,i_n)}=\sum_{i=1}^m P_1^{a_i(1)}\cdots P_n^{a_i(n)}\subseteq S^{(i_1,\ldots,i_n)}.$$
 Here $a_i(j)$ denotes the $j$-th component of the vector $\ab_i$. Throughout the rest of this paper we work with the fixed $n$-tuple $(i_1,\ldots,i_n)$. So we simply write $S^*$ and $I^*$, respectively, rather than $S^{(i_1,\ldots,i_n)}$ and  $I^{(i_1,\ldots,i_n)}$.

 For  monomials $\xb^\ab$ and $\xb^\bb$ in $S$ if $\xb^\bb|\xb^\ab$, then $P_1^{a_i(1)}\cdots P_n^{a_i(n)} \subseteq P_1^{b_i(1)}\cdots P_n^{b_i(n)}$. So the definition of $I^*$ does not depend on the choice of the set of monomial generators of $I$.

 For example, consider  $S=K[x_1,x_2,x_3]$ and the ordered $3$-tuple $(1,3,2)$. Then we have $P_1=(x_{11})$, $P_2=(x_{21},,x_{22},x_{23})$ and $P_3=(x_{31},x_{32})$. So  for the monomial ideal $I=(x_1x_2,x_3^2)$, the ideal $I^*\subseteq K[x_{11},x_{21},x_{22},x_{23},x_{31},x_{32}]$ is $P_1P_2+P_3^2$, namely
 $$I^*=(x_{11}x_{21},x_{11}x_{22},x_{11}x_{23},x_{31}^2,x_{31}x_{32},x_{32}^2).$$

 We define the $K$-algebra  homomorphism $\pi:S^*\rightarrow S$ by
 \[
 \pi(x_{ij})=x_i \; \text{ for all } i,j.
 \]

As usual, we denote by $G(I)$ the unique minimal set of monomial generators of a monomial ideal $I$.
Some basic properties of the expansion operator are given in the following lemma.
\begin{Lemma}
\label{operations}
Let $I$ and $J$ be  monomial ideals. Then
\begin{enumerate}
 \item[(i)] $f\in I^*$ if and only $\pi(f)\in I$, for all $f\in S^*$;
 \item[(ii)] $(I+J)^*=I^*+ J^*$;
 \item[(iii)] $(IJ)^*=I^* J^*$;
 \item[(iv)] $(I\cap J)^*=I^* \cap J^*$;
 \item[(v)]  $(I\:J)^*=I^*\:J^*$;
 \item[(vi)] $\sqrt{I^\ast}=(\sqrt{I})^\ast;$
 \item[(vii)] If the monomial ideal $Q$ is $P$-primary, then $Q^*$ is $P^*$-primary.
\end{enumerate}
\end{Lemma}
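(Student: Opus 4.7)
My plan is to establish a single combinatorial dictionary tying membership in $I^*$ to divisibility in $I$ via $\pi$, and then to read off (i)--(vii) with light bookkeeping. Write $P^{\ab}:=P_1^{a(1)}\cdots P_n^{a(n)}$ for $\ab=(a(1),\dots,a(n))\in\NN^n$. For a monomial $u=\prod_{j,k}x_{jk}^{c_{jk}}\in S^*$, set $\deg_j(u)=\sum_k c_{jk}$, so that $\pi(u)=\prod_j x_j^{\deg_j(u)}$. Since $P_j^a$ consists exactly of the monomials in $x_{j1},\dots,x_{ji_j}$ of total degree at least $a$, and the $P_j$ involve pairwise disjoint variables, a monomial $u$ lies in $P^{\ab}$ iff $\deg_j(u)\geq a(j)$ for every $j$, equivalently iff $\xb^{\ab}$ divides $\pi(u)$. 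Part (i) is then immediate at the monomial level: $u\in I^*$ iff some $\xb^{\ab_i}\in G(I)$ divides $\pi(u)$, iff $\pi(u)\in I$; since both ideals are monomial this extends term-by-term. Parts (ii) and (iii) follow straight from the definitions, using that the generators of $I+J$ are the union of those of $I$ and $J$ and that $P^{\ab+\bb}=P^{\ab}P^{\bb}$ on the nose.

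For (iv), combine the distributive law $(A+B)\cap C=(A\cap C)+(B\cap C)$ for monomial ideals with the pointwise identity $P^{\ab}\cap P^{\bb}=P^{\max(\ab,\bb)}$ (clear from the dictionary, since $\deg_j(u)\geq a(j)$ and $\deg_j(u)\geq b(j)$ iff $\deg_j(u)\geq\max(a(j),b(j))$) and with $I\cap J=(\lcm(\xb^{\ab_i},\xb^{\bb_j}))_{i,j}$; both sides then equal $\sum_{i,j} P^{\max(\ab_i,\bb_j)}$. For (v), use $(I:J)=\bigcap_j(I:\xb^{\bb_j})$ together with (iv) to reduce to the principal case $J=(\xb^{\bb})$. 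Here $(I:\xb^{\bb})^*$ is generated by $P^{\max(\ab_i-\bb,\,0)}$, and evaluating the colon-condition on the minimal monomials of $P^{\bb}$ (those with $\deg_j(v)=b(j)$ exactly) shows $u\in I^*:P^{\bb}$ iff some $\ab_i\in G(I)$ satisfies $\deg_j(u)\geq\max(a_i(j)-b(j),0)$ for all $j$; the two conditions match.

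For (vi), $\sqrt{I+J}=\sqrt{I}+\sqrt{J}$ for monomial ideals reduces to the principal case, and since the $P_j$ are primes on pairwise disjoint variable sets one has $P_1\cap\cdots\cap P_r=P_1\cdots P_r$; this yields $\sqrt{P^{\ab}}=\prod_{j:\,a(j)>0}P_j=(\sqrt{\xb^{\ab}})^*$. For (vii), invoke the standard characterization: a monomial ideal $Q$ is $P=(x_{j_1},\dots,x_{j_r})$-primary iff its generators lie in the variables of $P$ and $Q$ contains a pure power of each $x_{j_\ell}$. A pure power $x_{j_\ell}^N\in Q$ yields $P_{j_\ell}^N\subseteq Q^*$, hence pure powers of every variable of $P^*$; combined with $\sqrt{Q^*}=P^*$ from (vi), we conclude $Q^*$ is $P^*$-primary. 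The one delicate point is the verification in (v) that testing colons on the minimal monomials of $P^{\bb}$ already captures the full condition; this holds because membership $uv\in I^*$ depends only on $(\deg_1(v),\dots,\deg_n(v))$, so once a single $\ab_i$ works at the minimum it works uniformly across $P^{\bb}$.
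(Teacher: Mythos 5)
Your proposal is correct, and for parts (i)--(v) it is essentially the paper's own argument: everything is funneled through the dictionary (i), i.e.\ $u\in I^*$ iff $\pi(u)\in I$, and then (ii)--(v) are read off by reducing to principal ideals via $I+J$, products, $\lcm$'s, and $I:J=\bigcap_j I:\xb^{\bb_j}$. The only cosmetic difference in (v) is that the paper performs one more reduction, writing $(\xb^\bb)^*$ as a product of the $P_j^{b(j)}$ and using $I:L_1L_2=(I:L_1):L_2$ to land on the single-variable statement $(I:x_j)^*=I^*:P_j$, which then follows from (i) in one line; you instead handle the general principal case directly, which requires the ``delicate point'' you correctly flag about testing on the minimal generators of $P^\bb$.

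Parts (vi) and (vii) are where you genuinely diverge. For (vi) the paper uses the ring-theoretic identity $\sqrt{L_1+L_2}=\sqrt{\sqrt{L_1}+\sqrt{L_2}}$ and pushes it through the definition of $(-)^*$, whereas you use the monomial-specific fact $\sqrt{I+J}=\sqrt I+\sqrt J$ and compute $\sqrt{P^\ab}=\prod_{a(j)>0}P_j=\bigl(\sqrt{\xb^\ab}\bigr)^*$ explicitly; both are fine, and the cleanest route (which neither of you takes) is to note that (vi) follows immediately from (i) since $\pi(u^k)=\pi(u)^k$. For (vii) the paper again leans on (i): from $uv\in Q^*$ and $u\notin Q^*$ it passes to $\pi(u)\pi(v)\in Q$, $\pi(u)\notin Q$, concludes $\pi(v)\in P$, and then applies (i) and (vi) to get $v\in P^*=\sqrt{Q^*}$. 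You instead invoke the structural classification of primary monomial ideals (generated by monomials in the variables of $P$, with a pure power of each such variable). This works, but it is a heavier external input than the paper uses, and you should state explicitly the first half of the criterion for $Q^*$, namely that every generator of $Q^*$ lies in the variables of $P^*$ (it does: each $P^{\ab}$ with $\supp(\ab)\subseteq\supp(P)$ is generated by monomials in the variables $x_{j\ell}$ with $j\in\supp(P)$, which are exactly the generators of $P^*$). Once that is said, your argument is complete; the invocation of (vi) at the end is then redundant, since the two conditions in the classification already force $\sqrt{Q^*}=P^*$.
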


\begin{proof}
(i) Since $I$ and $I^*$ are monomial ideals, it is enough to show the statement holds for all monomials $u\in S^*$. So let $u\in S^*$ be a monomial. Then $u\in I^*$ if and only if for some monomial $\xb^{\ab}\in G(I)$  we have $u\in P_1^{a(1)}\cdots P_n^{a(n)}$, and this is the case if and only if $\xb^{\ab}$ divides $\pi(u)$.

(ii) is trivial.

(iii) Let $A=\{x^{\ab_1}\ldots x^{\ab_m}\}$ and $B=\{x^{\bb_1}\ldots x^{\bb_{m'}}\}$ be respectively a set of monomial generators of $I$ and $J$. The assertion follows from the fact that  $\{x^{\ab_i}x^{\bb_j}\: \; 1\leq i \leq m \text{ and } 1\leq j \leq m' \}$ is a set of monomial generators for $IJ$.

(iv) With the same notation as (ii), one has the set of monomial generators $\{\lcm(x^{\ab_i},x^{\bb_j}) \:1\leq i \leq m \text{ and } 1\leq j \leq m'\}$ of $I\cap J$ where $\lcm(x^{\ab_i},x^{\bb_j})$ is the least common multiple of $x^{\ab_i}$ and $x^{\bb_j}$. Therefore,
 $$ I^*\cap J^*= (\sum_{i=1}^m P_1^{a_i(1)}\cdots P_n^{a_i(n)})\cap(\sum_{j=1}^{m'} P_1^{b_j(1)}\cdots P_n^{b_j(n)})=$$

 $$\sum_{ 1\leq j \leq m'}\sum_{1\leq i \leq m } (P_1^{a_i(1)}\cdots P_n^{a_i(n)})\cap (P_1^{b_j(1)}\cdots P_n^{b_j(n)})= (I\cap J)^*$$
 The second equality holds because the summands are all monomial ideals.

(v) Since $I\:J= \bigcap_{u\in G(J)} I\:(u)$, by statement (iv) it is enough to show that  $(I\:(u))^*=I^*\:(u)^*$ for all monomials $u\in S$. Observe that if $u=\xb^{\ab}\in S$, then $(u)^*= P_1^{a_i(1)}\cdots P_n^{a_i(n)}\subseteq S^*$. In addition by properties of ideal quotient, one has that $I\:L_1L_2=(I:L_1):L_2$ for all ideals $L_1,L_2$ of $S$. Therefore, we only need to show that for each variable $x_j\in S$ the equality $(I\:x_j)^*=I^*\: P_j$ holds where  we set $P_j=(x_{j1},\ldots,x_{ji_j})$ as before.  So let $f\in S^*$. Then by (i), one has $fx_{j\ell}\in I^*$  for all $\ell$ if and only if $\pi(f)x_j=\pi(fx_{j\ell})\in I$. Hence $(I\:x_j)^*=I^*\: P_j$.

(vi) One should only notice that in general for any two ideals $L_1$ and $L_2$, we have $\sqrt{L_1+L_2}=\sqrt{\sqrt{L_1}+\sqrt{L_2}}$. With this observation, the definition of $I^*$ implies  (vi).

(vii) Let for two monomials $u,v\in S^*$, $uv\in Q^*$ but $u\not\in Q^*$. Then by (i) we have $\pi(uv)=\pi(u)\pi(v)\in Q$ and $\pi(u)\not\in Q$.  Hence $\pi(v)\in \sqrt{Q}=P$ because $Q$ is a primary ideal. Now the result follows from (i) and (vi).
 \end{proof}
 \begin{Proposition}
 \label{primarydecom}
 Let $I$ be a monomial ideal, and consider an (irredundant) primary decomposition $I=Q_1\cap \ldots\cap Q_m$ of $I$. Then
 $I^*=Q_1^*\cap \ldots\cap Q_m^*$ is an (irredundant)  primary decomposition of $I^*$.

 In particular, $\Ass(S^*/I^*)=\{P^*\:\; P\in \Ass(S/I)\}.$
 \end{Proposition}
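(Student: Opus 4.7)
The plan is to derive everything directly from Lemma~\ref{operations}, using part (iv) to distribute expansion over intersection, part (vii) to control the primary components, and part (i) to lift witnesses of irredundancy from $S$ to $S^*$.

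First I would iterate Lemma~\ref{operations}(iv) to conclude $I^{\ast}=Q_1^{\ast}\cap\cdots\cap Q_m^{\ast}$. Next, by Lemma~\ref{operations}(vii), each $Q_i^{\ast}$ is $P_i^{\ast}$-primary where $P_i=\sqrt{Q_i}\in\Ass(S/I)$. So the displayed decomposition is at least \emph{some} primary decomposition of $I^{\ast}$; the remaining task is to verify irredundancy, which amounts to (a) the $P_i^{\ast}$ being pairwise distinct, and (b) no factor $Q_i^{\ast}$ containing the intersection of the others.

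For (a), observe from the definition that $P_i^{\ast}$ is generated by the variables $x_{k\ell}$ with $x_k\in P_i$; thus $\pi(P_i^{\ast})=P_i$, and so $P_i^{\ast}=P_j^{\ast}$ would force $P_i=P_j$. For (b), irredundancy of the original decomposition supplies, for every $i$, a monomial $u\in\bigcap_{j\neq i}Q_j$ with $u\notin Q_i$. I would then produce an explicit preimage $v\in S^{\ast}$ of $u$ under $\pi$ (for instance, if $u=\prod_k x_k^{a_k}$, take $v=\prod_k x_{k1}^{a_k}$). By Lemma~\ref{operations}(i), $v\in Q_j^{\ast}\iff\pi(v)=u\in Q_j$; hence $v\in\bigcap_{j\neq i}Q_j^{\ast}$ while $v\notin Q_i^{\ast}$. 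This rules out redundancy.

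Finally, the ``in particular'' statement is immediate: the set of associated primes of $S^{\ast}/I^{\ast}$ equals the set of radicals of the components in an irredundant primary decomposition, which is $\{P_1^{\ast},\dots,P_m^{\ast}\}=\{P^{\ast}:P\in\Ass(S/I)\}$. The only step that requires any real thought is the construction of the witness $v$, but once one recognizes that Lemma~\ref{operations}(i) makes $\pi$ a faithful detector of membership in expanded ideals, any monomial preimage of $u$ works, so no genuine obstacle arises.
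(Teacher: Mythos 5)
Your proof is correct and follows the same route the paper takes (the paper's proof is the one-liner ``follows from parts (iv) and (vii) of Lemma~\ref{operations}''). You go further than the paper by making the irredundancy explicit: the paper leaves it implicit, whereas you invoke Lemma~\ref{operations}(i) to lift a witness monomial $u\in\bigcap_{j\neq i}Q_j\setminus Q_i$ to a monomial $v\in S^*$ with $\pi(v)=u$, and note that the $P_i^*$ stay distinct because $\pi(P_i^*)=P_i$. This is exactly the right way to close the gap, and it is a genuine (if small) improvement in completeness over the published argument.
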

\begin{proof}
The statement follows from part (iv) and (vii) of Lemma \ref{operations}.
\end{proof}

\begin{Remark}
{\em Let $I\subseteq S$ be an ideal  of height $r$ such that $\height P=r$ for all $P\in \Ass(S/I)$. Without loss of generality, we may assume that $i_1\leq i_2\leq \cdots \leq i_n$. Then Proposition~\ref{primarydecom} implies that $\height I^*=i_1+\cdots+i_r$, and consequently $\dim S^*/I^*=i_{r+1}+\cdots+i_n\geq n-r$. As a result,  for such an ideal $I$, we have $\dim S/I=\dim S^*/I^*$ if and only if $i_1=\ldots=i_n=1$ or $\dim S/I=0$.      }
\end{Remark}
By a result of Brodmann \cite{B}, in any Noetherian ring $R$ the set $\Ass(R/I^s)$  stabilizes for $s\gg 0$, that is, there exists a number $s_0$ such that $\Ass(R/I^s)=\Ass(R/I^{s_0})$ for all $s\geq s_0$. This stable set $\Ass(R/I^{s_0})$ is denoted by $\Ass^\infty(I)$
\begin{Corollary}
Let $I$ be a monomial ideal. Then $\Ass^\infty(I^*)=\{P^*\: P\in \Ass^\infty(I)\}$.
\end{Corollary}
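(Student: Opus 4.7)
The plan is to combine Brodmann's stabilization result with the two key facts already established: Proposition~\ref{primarydecom} (which computes $\Ass(S^*/J^*)$ from $\Ass(S/J)$ for any monomial ideal $J$) and Lemma~\ref{operations}(iii) (which gives $(IJ)^* = I^*J^*$, and hence by induction $(I^s)^* = (I^*)^s$ for every $s\geq 1$).

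By Brodmann's theorem applied to $I$ in $S$, there is an integer $s_1$ such that $\Ass(S/I^s) = \Ass^\infty(I)$ for every $s \geq s_1$. Applied to $I^*$ in $S^*$, there is an integer $s_2$ such that $\Ass(S^*/(I^*)^s) = \Ass^\infty(I^*)$ for every $s \geq s_2$. Fix any $s \geq \max(s_1, s_2)$. I would then compute
\[
\Ass^\infty(I^*) \;=\; \Ass\bigl(S^*/(I^*)^s\bigr) \;=\; \Ass\bigl(S^*/(I^s)^*\bigr) \;=\; \{P^*\: P\in \Ass(S/I^s)\} \;=\; \{P^*\: P\in \Ass^\infty(I)\},
\]
where the second equality uses Lemma~\ref{operations}(iii) and the third is Proposition~\ref{primarydecom} applied to the monomial ideal $I^s$. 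This chain of identifications is the entire argument.

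There is essentially no obstacle: each step is a direct invocation of a previously proved result, and the interchange $(I^s)^* = (I^*)^s$ is precisely what makes the functor compatible with taking powers. The only point worth checking is that Brodmann's theorem is available in both polynomial rings $S$ and $S^*$, which is immediate since both are Noetherian. Thus the proof can be written in just a few lines.
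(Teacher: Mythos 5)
Your proof is correct and follows exactly the route the paper takes: apply Lemma~\ref{operations}(iii) to get $(I^s)^* = (I^*)^s$, then apply Proposition~\ref{primarydecom} to $I^s$, and invoke Brodmann's stabilization in both $S$ and $S^*$. The paper's own proof is simply a terser statement of the same two-line chain of equalities, leaving the ``take $s$ large enough'' step implicit.
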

\begin{proof}
By part (iii) of Lemma \ref{operations} one has $(I^k)^*=(I^*)^k$. Hence  $\Ass(S^*/(I^*)^k)=\{P^*\:\; P\in \Ass(S/I^k)\}$ by Proposition \ref{primarydecom}.
\end{proof}
Let $I$ be a monomial ideal with $\Min(I)=\{P_1,\ldots,P_r\}$.  Given an integer $k\geq 1$, the $k$-th symbolic power $I^{(k)}$ of $I$  is defined to be
$$I^{(k)}=Q_1\cap\ldots \cap Q_r$$
where $Q_i$ is the $P_i$-primary component of $I^k$.
\begin{Corollary}
\label{symbolic-power}
Let $I$ be a monomial ideal. Then ${(I^{(k)})}^*={(I^*)}^{(k)} $ for all $k\geq 1$.
\end{Corollary}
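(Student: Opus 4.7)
The plan is to prove $(I^{(k)})^* = (I^*)^{(k)}$ by unpacking both sides via primary decomposition, using the fact that the expansion operator interacts nicely with primary decompositions, as established in Lemma~\ref{operations} and Proposition~\ref{primarydecom}.

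First, fix an irredundant primary decomposition
\[
I^k = Q_1 \cap \cdots \cap Q_r \cap Q_{r+1} \cap \cdots \cap Q_s,
\]
where $Q_1,\ldots,Q_r$ are the primary components associated to the minimal primes $P_1,\ldots,P_r$ of $I$, and $Q_{r+1},\ldots,Q_s$ are the embedded components. By the definition of the symbolic power, $I^{(k)} = Q_1 \cap \cdots \cap Q_r$. Applying Lemma~\ref{operations}(iv) iteratively then gives
\[
(I^{(k)})^* = Q_1^* \cap \cdots \cap Q_r^*.
\]

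Next, I compute $(I^*)^{(k)}$. By Lemma~\ref{operations}(iii), $(I^*)^k = (I^k)^*$, and Proposition~\ref{primarydecom} shows that
\[
(I^k)^* = Q_1^* \cap \cdots \cap Q_s^*
\]
is an irredundant primary decomposition of $(I^*)^k$, with $Q_i^*$ being $P_i^*$-primary. To read off the symbolic power I need to identify which among $P_1^*,\ldots,P_s^*$ are the minimal primes of $I^*$. This is the one step that needs a small verification: the correspondence $P \mapsto P^*$ on monomial primes reflects inclusions, since applying the surjection $\pi \colon S^* \to S$ recovers $P$ from $P^*$, so $P^* \subseteq Q^*$ if and only if $P \subseteq Q$. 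Combined with Proposition~\ref{primarydecom} applied to $I$ itself, this yields $\Min(I^*) = \{P_1^*, \ldots, P_r^*\}$. Therefore
\[
(I^*)^{(k)} = Q_1^* \cap \cdots \cap Q_r^*,
\]
matching the previous computation.

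I expect no serious obstacle here: the argument is a straightforward bookkeeping of primary decompositions, and every ingredient is already in place. The only point requiring a moment of care is the identification $\Min(I^*) = \{P^* : P \in \Min(I)\}$, which reduces to the fact that $\pi(P^*) = P$ makes the expansion of monomial primes an order-preserving injection in both directions.
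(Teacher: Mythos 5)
Your proof is correct and fills in exactly the argument the paper leaves implicit (the paper states Corollary~\ref{symbolic-power} without a proof, treating it as an immediate consequence of Lemma~\ref{operations}(iii),(iv) and Proposition~\ref{primarydecom}). The one point that genuinely needs verification, namely that $P\mapsto P^*$ reflects as well as preserves inclusions so that $\Min(I^*)=\{P^*:P\in\Min(I)\}$, you handle correctly via Lemma~\ref{operations}(i) and the map $\pi$.
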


\begin{Proposition}
Let I be a  monomial ideal. If $I$ has linear quotients, then $I^*$ also has linear quotients.
\end{Proposition}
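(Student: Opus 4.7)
The plan is to lift the linear-quotients order $u_1,\ldots,u_m$ of $G(I)$ to a linear-quotients order on $G(I^*)$. Write $\ab_j = (a_j(1),\ldots,a_j(n))$ for the exponent vector of $u_j$, and set $G_j := G((u_j)^*) = G(P_1^{a_j(1)}\cdots P_n^{a_j(n)})$. A preliminary observation is that $G(I^*) = G_1 \sqcup \cdots \sqcup G_m$ as sets: every $v \in G_j$ satisfies $\pi(v) = u_j$, and since no $u_i$ divides $u_j$ for $i\ne j$, no element of $G_i$ divides an element of $G_j$, so each $G_j$ survives in the minimal generating set. Order $G(I^*)$ by listing the $G_j$'s in the given order, and inside each $G_j$ by any linear-quotients order on $(u_j)^*$; such an internal order exists because $(u_j)^*$ is a product of powers of monomial prime ideals in pairwise disjoint variable sets, hence a polymatroidal ideal, and polymatroidal ideals are known to have linear quotients.

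The main verification is the cross-fiber step. Fix $v \in G_j$ and $v' \in G_i$ with $i<j$; one needs a variable of $S^*$ that divides $v'/\gcd(v',v)$ and carries $v$ into the ideal generated by all earlier generators. The linear-quotients hypothesis for $I$ supplies a variable $x_\ell$ and an index $t<j$ with $x_\ell \mid u_i/\gcd(u_i,u_j)$ and $u_t \mid x_\ell u_j$. The first condition forces $a_i(\ell) > a_j(\ell)$, so $v'$ uses strictly more $P_\ell$-variables than $v$ does; pick any $x_{\ell s}$ appearing in $v'$ but not in $v$. Then $x_{\ell s} \mid v'/\gcd(v',v)$, while $\pi(x_{\ell s}\,v) = x_\ell u_j \in (u_t)$, so Lemma~\ref{operations}(i) gives $x_{\ell s}\,v \in (u_t)^* \subseteq \sum_{k<j}(u_k)^*$, completing this case.

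The intra-fiber case, where $v'$ and $v$ both lie in $G_j$, is immediate from the chosen internal order, since earlier generators of $(u_j)^*$ are in particular earlier generators of $I^*$. The main obstacle is the cross-fiber verification: because $\gcd(v',v)$ in $S^*$ can be strictly smaller than what $\gcd(u_i,u_j)$ in $S$ would predict, one cannot naively lift the variable $x_\ell$ along $\pi$. The argument succeeds only because the $P_\ell$-variables occur \emph{squarefreely} in every element of $G_i$ and $G_j$, so the exponent inequality $a_i(\ell) > a_j(\ell)$ translates directly into the existence of a distinguishing variable $x_{\ell s}$ that both divides $v'/\gcd(v',v)$ and sits in the appropriate colon ideal.
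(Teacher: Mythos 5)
Your overall strategy is correct and, in the cross-fiber case, is essentially the same as the paper's: order the generators of $I^*$ by fiber, and lift the linear-quotients data of $I$ through $\pi$ via Lemma~\ref{operations}(i). You diverge from the paper in the intra-fiber case: the paper fixes a lexicographic order inside each fiber $G_j$ and argues directly that the required colon variable exists there, while you invoke the fact that $(u_j)^*$, being a product of Veronese-type ideals on disjoint sets of variables, is polymatroidal and hence has linear quotients by a known theorem, and then splice that internal order in. Both are correct; your route is shorter but imports an external result, whereas the paper's self-contained lexicographic argument also sets up exactly the order (and the Herzog--Takayama resolution of $P^a$ relative to it) that drives the double-complex construction in Section~3.

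One phrase in your cross-fiber step needs repair. Having deduced $a_i(\ell) > a_j(\ell)$, you say ``pick any $x_{\ell s}$ appearing in $v'$ but not in $v$,'' and in your closing remark you attribute the existence of such a variable to the $P_\ell$-variables ``occurring squarefreely'' in the elements of $G_i$ and $G_j$. Neither is correct as stated: if, for example, $i_\ell = 1$ and $1 \le a_j(\ell) < a_i(\ell)$, then the unique $P_\ell$-variable $x_{\ell 1}$ appears in both $v'$ and $v$, and the generators are not squarefree in that variable. What you actually need, and what is true, is weaker: the total $P_\ell$-degree of $v'$ equals $a_i(\ell)$, which exceeds the total $P_\ell$-degree $a_j(\ell)$ of $v$, so some $x_{\ell s}$ must occur in $v'$ with strictly larger exponent than in $v$, hence divides $v'/\gcd(v',v)$. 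With that correction the rest of the step is sound: $\pi(x_{\ell s}v) = x_\ell u_j \in (u_t)$, so $x_{\ell s}v \in (u_t)^*$ by Lemma~\ref{operations}(i), which is contained in the ideal generated by the generators preceding $v$.
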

\begin{proof}
Let $I$ have linear quotients with respect to the ordering
\begin{eqnarray}
\label{order1}
u_1, u_2, \ldots, u_m
\end{eqnarray}
 of $G(I)$. We show that $I^*$ has linear quotients with respect to the ordering
\begin{eqnarray}
\label{order2}
u_{11},\ldots,u_{1r_1},\ldots,u_{m1},\ldots,u_{mr_m}
\end{eqnarray}
of $G(I^*)$ where for all $i,j$ we have $\pi(u_{ij})=u_i$, and $u_{i1}<_{\Lex} \cdots <_{\Lex} u_{ir_i}$ by the ordering of
$$x_{11}>\cdots >x_{1i_1}>\cdots>x_{n1}>\cdots >x_{ni_n}.$$
Let $v,w\in G(I^*)$ be two monomials such that in (\ref{order2}) the monomial $v$ appears before $w$. In order to prove that $I^*$ has linear quotients with respect to the above mentioned order, we must  show that there exists a variable $x_{ij}$ and a monomial $v'\in G(I^*)$ such that  $x_{ij}|(v/\gcd(v,w))$, in (\ref{order2}) the monomial $v'$ comes before $w$,  and $x_{ij}=v'/\gcd(v',w)$.

First assume that $\pi(v)=\pi(w)$, and $x_{ij}$ is the greatest variable with respect to the given order on the variables such that $x_{ij}|(v/\gcd(v,w))$. Let $v'\in G(I^*)$ be the monomial with $\pi(v')=\pi(w)$ and $v'=x_{x_{ij}}\gcd(w,v')$. By the choice of $x_{ij}$ and since the order of  monomials in (\ref{order2}) are given by the lexicographic order, the monomial  $v'$ comes before $w$ in~(\ref{order2}), as desired.

Next assume that $\pi(v)\neq\pi(w)$. Since $I$ has linear quotients, there exists a monomial $u\in G(I)$, coming before $\pi(w)$ in~(\ref{order1}), for which             there exists a variable $x_{i}$ such that $x_i|(\pi(v)/\gcd(\pi(v),\pi(w))$ and $x_i=u/\gcd(u,\pi(w))$. Let $x_{ij}$ be the variable which divides $v/\gcd(v,w)$.     Then there exists a monomial $v'\in G(I^*)$ with $\pi(v')=u$, and $v'=x_{ij}\gcd(v',w)$. Since $\pi(v')$ comes before $\pi(w)$ in~(\ref{order1}), the monomial $v'$ also appears before $w$ in~(\ref{order2}). Thus $v'$ is the desired element.
\end{proof}

\section{The expansion functor}
Throughout this paper, we consider the  standard  multigraded structure for polynomial rings, that is, the  graded components are the one-dimensional $K$-spaces spanned by the monomials. Let $\C(S)$ denote the category of finitely generated multigraded $S$-modules whose morphisms are
the multigraded $S$-module homomorphisms.   We fix an $n$-tuple $(i_1,\ldots,i_n)$ and define the expansion functor which assigns a multigraded module $M^*$ over the standard multigraded polynomial ring $S^*$ to each $M\in \C(S)$.

In the first step we define the expansion functor on multigraded free modules and multigraded maps between them.


\medskip

Let $F=\bigoplus_i S(-\ab_i)$ be a multigraded free $S$-module. Then $F$, as a multigraded module, is isomorphic to  the direct sum $\bigoplus_i(\xb^{\ab_i})$ of principal monomial ideals of $S$. This presentation of $F$ by principal monomial ideals is unique. We define the $S^*$-module $F^*$ to be
\[
F^*=\bigoplus_i(\xb^{\ab_i})^*.
\]
Observe that in general $F^*$ is no longer a free $S^*$-module.
Let $$\alpha\:\bigoplus_{i=1}^r S(-\ab_i)\rightarrow \bigoplus_{j=1}^s S(-\bb_j)$$ be a multigraded $S$-module homomorphism. Then for  the restriction $\alpha_{i,j}\: S(-\ab_i)\rightarrow  S(-\bb_j)$ of $\alpha$ we have
\[
\alpha_{i,j}(f)=\lambda_{ji}\xb^{\ab_i-\bb_j}f \;\;\; \text{for all $f\in S(-\ab_i)$}
\]
with  $ \lambda_{ji}\in K$,  and  $\lambda_{ji}=0$ if $\xb^{\bb_j}$ does not divide $\xb^{\ab_i}$. The  $(s\times r)$-matrix $[\lambda_{ji}]$  is called the {\em monomial matrix expression} of  $\alpha$.

Notice that if  $\xb^{\bb_j}$  divides  $\xb^{\ab_i}$, then $(\xb^{\ab_i})^* \subseteq (\xb^{\bb_j})^*$. So we define a multigraded $S^*$-module homomorphism $\alpha^*\:\bigoplus_{i=1}^r (\xb^{\ab_i})^*\rightarrow \bigoplus_{j=1}^s(\xb^{\bb_j})^*$ associated to $\alpha$ whose restriction  $\alpha_{ij}^*\: (\xb^{\ab_i})^*\rightarrow (\xb^{\bb_j})^*$  is defined as follows: if $\xb^{\bb_j}| \xb^{\ab_i}$,   then
$$
 \alpha_{ij}^*(f)=\lambda_{ji}f \;\;\;\;\text{for all $f\in (\xb^{\ab_i})^*$},
$$
and $\alpha_{ij}^*$ is the zero map if  $\xb^{\bb_j}$ does not divide  $\xb^{\ab_i}$.

Obviously one has $(\id_{F})^*=\id_{F^*}$ for each finitely generated multigraded free  $S$-module $F$.

\begin{Lemma}
\label{complex+exact}
Let
\begin{eqnarray}
\label{seq}
\bigoplus_{i=1}^r S(-\ab_i)\overset{\alpha}{\to} \bigoplus_{j=1}^s S(-\bb_j) \overset{\beta}{\to} \bigoplus_{k=1}^t S(-\cb_k)
\end{eqnarray}
 be  a sequence of multigraded $S$-module homomorphisms. Then the following statements hold:
\begin{enumerate}
\item[(i)] $(\beta\alpha)^*={\beta}^*\alpha^*$;
\item[(ii)] If the sequence {\em (\ref{seq})} is exact, then
\begin{eqnarray}
\label{star-seq}
\bigoplus_{i=1}^r(\xb^{\ab_i})^* \overset{\alpha^*}{\to} \bigoplus_{j=1}^s(\xb^{\bb_j})^* \overset{\beta^*}{\to}  \bigoplus_{k=1}^t(\xb^{\cb_k})^*
\end{eqnarray}
is also an exact sequence of multigraded $S^*$-modules.
\end{enumerate}
\end{Lemma}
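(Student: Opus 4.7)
For part (i), I would argue by a direct matrix calculation. If $[\lambda_{ji}]$ and $[\mu_{kj}]$ are the monomial matrix expressions of $\alpha$ and $\beta$, then $\beta\alpha$ has scalar matrix $[\mu_{kj}][\lambda_{ji}]$ (divisibility of the monomial twists is automatically encoded in the vanishing of $\lambda_{ji}$ and $\mu_{kj}$). The expansion functor leaves these scalars untouched and only reinterprets each block as multiplication between the appropriate $(\xb^{\ab})^*$'s, so $(\beta\alpha)^*$ and $\beta^*\alpha^*$ are described by the same scalar matrix acting on the same source and target.

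For part (ii), part (i) already gives $\beta^*\alpha^* = (\beta\alpha)^* = 0$, so (\ref{star-seq}) is a complex; the substance is exactness. My plan is to verify exactness multidegree by multidegree. Fix a multidegree $\bb = (b_{jl})$ of $S^*$ and set $\bar\bb \in \NN^n$ by $\bar b_j = \sum_{l=1}^{i_j} b_{jl}$. The central observation is the natural $K$-linear identification
\[
\bigl((\xb^\ab)^*\bigr)_\bb \;\cong\; \bigl(S(-\ab)\bigr)_{\bar\bb},
\]
valid for every $\ab \in \NN^n$: both sides are one-dimensional when $\xb^\ab \mid \xb^{\bar\bb}$ (with natural bases $\xb^\bb$ and $\xb^{\bar\bb-\ab}$ respectively) and both vanish otherwise.

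Taking this identification summand by summand turns the degree-$\bb$ strand of (\ref{star-seq}) into the degree-$\bar\bb$ strand of (\ref{seq}). The maps then match because the matrices of $\alpha^*$ at $\bb$ and of $\alpha$ at $\bar\bb$ both equal $[\lambda_{ji}]$ in the chosen bases: $\alpha^*$ multiplies $\xb^\bb$ by $\lambda_{ji}$ directly, while $\alpha$ sends $\xb^{\bar\bb-\ab_i}$ to $\lambda_{ji}\xb^{\ab_i-\bb_j}\cdot \xb^{\bar\bb-\ab_i} = \lambda_{ji}\xb^{\bar\bb-\bb_j}$ (and analogously for $\beta^*$ versus $\beta$). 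Since (\ref{seq}) is exact in every multidegree of $S$, the complex (\ref{star-seq}) is exact in every multidegree of $S^*$, hence exact.

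The main obstacle is purely notational: setting up the isomorphism $((\xb^\ab)^*)_\bb \cong (S(-\ab))_{\bar\bb}$ uniformly enough that it is compatible with the direct-sum decompositions on both sides, including the bookkeeping of which summands vanish in a given multidegree. Once this identification is cleanly in place, the transfer of exactness is automatic.
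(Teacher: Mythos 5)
Your proof is correct and takes essentially the same route as the paper: part (i) by the matrix computation, and part (ii) by passing to a fixed multidegree of $S^*$, where the degree-$\bb$ strand of the expanded sequence identifies with the degree-$\bar\bb$ strand of the original one via one-dimensional-or-zero $K$-vector spaces with matching matrices. The paper packages the same identification as the commutative diagram with vertical maps $\overline{\pi}$ induced by $\pi\colon S^*\to S$, $x_{ij}\mapsto x_i$, and observes that $\overline{\pi}$ restricts to an isomorphism in each multidegree; that restricted map is exactly the identification you construct.
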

\begin{proof}
Let $[\lambda_{ji}]$ and  $[\mu_{kj}]$ be respectively monomial matrix expressions of $\alpha$ and $\beta$.

(i) follows from the definition and the  fact that the product $[\mu_{kj}][\lambda_{ji}]$ is the monomial expression of $\beta\alpha$.

(ii) By abuse of  notation, we may consider  the following sequence which is isomorphic to the sequence (\ref{seq}):
\[
\bigoplus_{i=1}^r(\xb^{\ab_i}) \overset{\alpha}{\to} \bigoplus_{j=1}^s(\xb^{\bb_j}) \overset{\beta}{\to}  \bigoplus_{k=1}^t(\xb^{\cb_k}).
\]
Here the maps  are given by the matrices $[\lambda_{ji}]$ and  $[\mu_{kj}]$.
In order to show that the sequence~(\ref{star-seq}) is exact, it is enough to show that it is exact in each multidegree.
For this purpose, first consider the following commutative diagram

\begin{eqnarray}
\label{diagram}
\begin{CD}
\bigoplus_{i=1}^r(\xb^{\ab_i})^* @>\alpha^*>> \bigoplus_{j=1}^s(\xb^{\bb_j})^* @>\beta^*>>  \bigoplus_{k=1}^t(\xb^{\cb_k})^*\\
@V\overline{\pi} VV  @V\overline{\pi} VV @V\overline{\pi} VV \\
\bigoplus_{i=1}^r(\xb^{\ab_i}) @>\alpha>> \bigoplus_{j=1}^s(\xb^{\bb_j}) @>\beta>>  \bigoplus_{k=1}^t(\xb^{\cb_k})
\end{CD}
\vspace{0.1cm}
\end{eqnarray}
where the vertical maps $\overline{\pi}$ are defined as follows: if $J_1,\ldots,J_m$ are monomial ideals of $S$, then
$\overline{\pi}:J_1^*\oplus\cdots\oplus J_m^*\rightarrow J_1\oplus\cdots\oplus J_m$ is defined by
\[
\overline{\pi}(f_1,\ldots,f_m)=(\pi(f_1),\ldots,\pi(f_m));
\]
here $\pi$ is the $K$-algebra homomorphism introduced in the first section.
Let $\eta=\sum_{j=1}^n i_j$. Consider~(\ref{diagram}) in each multidegree $\db^*=(d_{1,1},\ldots,d_{1,i_1},\ldots,d_{n,1},\ldots,d_{n,i_n})\in \mathbb{N}^\eta$ and let $\db=(d_1,\ldots,d_n)$ where $d_j=d_{j,1}+\cdots+d_{j,i_j}$ for all $j$, namely
\begin{eqnarray}
\label{diagram2}
\begin{CD}
(\bigoplus_{i=1}^r(\xb^{\ab_i})^*)_{\db^*} @>\alpha^*>> (\bigoplus_{j=1}^s(\xb^{\bb_j})^*)_{\db^*} @>\beta^*>>  (\bigoplus_{k=1}^t(\xb^{\cb_k})^*)_{\db^*}\\
@V\overline{\pi} V\cong V  @V\overline{\pi} V\cong V @V\overline{\pi} V\cong V \\
(\bigoplus_{i=1}^r(\xb^{\ab_i}))_{\db} @>\alpha>> (\bigoplus_{j=1}^s(\xb^{\bb_j}))_{\db} @>\beta>>  (\bigoplus_{k=1}^t(\xb^{\cb_k}))_{\db}
\end{CD}
\vspace{0.2cm}
\end{eqnarray}
The above diagram is commutative. Furthermore, the restriction of $\pi:S^*\to S$ to the multidegree $d^*$, namely $\pi:S^*_{d^*}\to S_d$, is an isomorphism of $K$-vector spaces. Therefore, the given restrictions of $\overline{\pi}$ in~(\ref{diagram2}) are isomorphism of $K$-vector spaces. Hence the exactness of the second row in~(\ref{diagram2}) implies the exactness of the first row.
\end{proof}

Now we are ready to define the expansion functor $\C(S)\to \C(S^*)$. For each $M\in \C(S)$ we choose a multigraded free presentation
\[
F_1\overset{\varphi}{\To} F_0 \to M\to 0,
\]
and set $M^*=\Coker \varphi^*$. Let $\alpha:M\to N$ be a morphism in  the category $\C(S)$, and let $F_1\overset{\varphi}{\to} F_0 \to M\to 0$ and $G_1\overset{\psi}{\to} G_0 \to N\to 0$ be respectively a multigraded free presentation of $M$ and $N$.
We choose a lifting of $\alpha$:
\[
\begin{CD}
F_1      @>\varphi>>    F_0           @>>>        M         @>>>     0\\
@V\alpha_1VV            @V\alpha_0VV         @VV\alpha V\\
G_1      @>\psi>>    G_0           @>>>        N         @>>>     0
\end{CD}
\vspace{0.2cm}
\]
Let $\alpha^*$ be the map induced by maps $\alpha_0^*$ and $\alpha_1^*$. It is easily seen this assignment defines an additive functor and as a consequence of Lemma~\ref{complex+exact},  such defined expansion functor is exact. In particular we have

\begin{Theorem}
\label{exact}
Let $M$ be a finitely generated multigraded $S$-module with a multigraded free resolution
\[
\FF\: 0\to F_p \overset{\varphi_p}{\to} F_{p-1}\to \cdots \to F_1 \overset{\varphi_1}{\to} F_0\to 0.
\]
Then
\[
\FF^*\: 0\to F_p^*\overset{\varphi_p^*}{\to} F_{p-1}^*\to \cdots \to F_1^*\overset{\varphi_1^*}{\to} F_0^*\to 0
\]
is an acyclic sequence of multigraded $S^*$-modules   with $H_0(\FF^*)=M^*$.
\end{Theorem}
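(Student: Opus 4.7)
The plan is to split the resolution $\FF$ into overlapping three-term exact sequences, apply Lemma~\ref{complex+exact}(ii) to each piece, and then identify $H_0(\FF^*)$ directly from the definition of $M^*$. No new ideas beyond what is already established in Section~2 should be needed; the work is to package the existing exactness lemma so it covers every spot in $\FF^*$.

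First I would verify that $\FF^*$ is a complex. This is immediate from Lemma~\ref{complex+exact}(i): since $\varphi_i\varphi_{i+1}=0$, we have $\varphi_i^*\varphi_{i+1}^*=(\varphi_i\varphi_{i+1})^*=0^*=0$. So the sequence $\FF^*$ is at least a complex of multigraded $S^*$-modules.

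Next I would prove exactness at each $F_i^*$ with $1\le i\le p-1$. For such $i$ the three-term piece
\[
F_{i+1}\overset{\varphi_{i+1}}{\to}F_i\overset{\varphi_i}{\to}F_{i-1}
\]
is exact at $F_i$ by hypothesis, so Lemma~\ref{complex+exact}(ii) applied verbatim yields exactness of $F_{i+1}^*\overset{\varphi_{i+1}^*}{\to}F_i^*\overset{\varphi_i^*}{\to}F_{i-1}^*$ at $F_i^*$. For the injectivity of $\varphi_p^*$, I would view the top of $\FF$ as the three-term exact sequence $0\to F_p\overset{\varphi_p}{\to}F_{p-1}$; since $0^*=0$ and the expansion functor sends zero maps to zero maps, Lemma~\ref{complex+exact}(ii) again gives exactness of $0\to F_p^*\overset{\varphi_p^*}{\to}F_{p-1}^*$, i.e. $\Ker\varphi_p^*=0$.

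Finally, for the $H_0$-computation, I would invoke the very definition of the expansion functor on modules. The tail $F_1\overset{\varphi_1}{\to}F_0\to M\to 0$ is a multigraded free presentation of $M$, and by definition $M^*=\Coker\varphi_1^*=F_0^*/\Im\varphi_1^*=H_0(\FF^*)$. Combining the three steps gives acyclicity in positive homological degrees together with $H_0(\FF^*)\iso M^*$, which is the claim. The only mild subtlety, and the one place I would be careful, is the boundary case at $F_p^*$: one has to notice that Lemma~\ref{complex+exact}(ii) still applies once one pads the top with a zero module; everything else is a direct transcription of the lemma.
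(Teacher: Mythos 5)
Your proposal is correct and follows essentially the same route the paper intends: the paper dispatches Theorem~\ref{exact} in one line by observing that Lemma~\ref{complex+exact} makes the expansion functor exact, and you simply unpack that into the piecewise application of Lemma~\ref{complex+exact}(ii) to the three-term slices of $\FF$ (including the padded slice $0\to F_p\to F_{p-1}$), together with the identification $H_0(\FF^*)=\Coker\varphi_1^*=M^*$ straight from the definition of the functor on modules. No new ideas are introduced and none are needed; the argument is sound.
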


Let $I\subseteq S$ be a monomial ideal. The monomial ideal $I^*$ as defined in Section~1 is isomorphic, as a multigraded $S^*$-module, to the module which is obtained by applying the expansion functor to $I$. Therefore, there is no ambiguity in our notation. Indeed, applying the expansion functor to the exact sequence of $S$-modules
\[
0\to I \hookrightarrow S \to S/I \to 0,
\]
and using the fact that it is an exact functor we obtain that $(S/I)^*\iso S^*/I^*$. On the other hand, in order to compute  $(S/I)^*$ we choose a multigraded free presentation
\[
F_1\overset{\varphi_1}{\To} S\To S/I\To 0.
\]
Then $(S/I)^*\iso \Coker(\varphi_1^*)$. Since the image of $\varphi_1^*$  coincides with the ideal  $I^*$ as defined in Section~1, the assertion follows.

\section{The free $S^*$-resolution of $M^*$}
Let $M$ be a finitely generated multigraded $S$-module, and  $\FF$ be a  multigraded free resolution of $M$. We are going to construct a multigraded free resolution of $M^*$, the expansion of $M$ with respect to the $n$-tuple $(i_1,\ldots,i_n)$, by  the resolution $\FF$. To have a better perspective on this construction, let
\[
\FF\: 0\to F_p \overset{\varphi_p}{\to} F_{p-1}\to \cdots \to F_1 \overset{\varphi_1}{\to} F_0\to 0,
\]
and consider the acyclic complex $\FF^*$, see Theorem \ref{exact},
\[
\FF^*\: 0\to F_p^*\overset{\varphi_p^*}{\to} F_{p-1}^*\to \cdots \to F_1^*\overset{\varphi_1^*}{\to} F_0^*\to 0.
\]
We will first construct a minimal multigraded free resolution $\GG_i$ of each $F_i^*$, and  a natural lifting $\GG_i\to \GG_{i-1}$ of each map $\varphi_i^*$ to obtain a double complex $\CC$. Then we show that the total complex of $\CC$ is a free resolution of $M^*$,  and it is minimal if the free resolution $\FF$ is  minimal.

\medskip
To obtain the desired resolution $\GG_i$ of $F_i^*$ and a suitable lifting of $\varphi_i^*$ we need some preparation.
Let $J\subseteq S=K[x_1,\ldots,x_n]$ be a monomial ideal for which $G(J)=\{u_1,\ldots,u_m\}$ is the minimal set of monomial generators with a non-decreasing ordering with respect to their total degree.  Suppose that $J$ has linear quotients with respect to the ordering $u_1,\ldots,u_m$ of generators. Then $\set(u_j)$ is defined to be
$$\set(u_j)=\{k\in [n]\: x_k\in (u_1,\ldots,u_{j-1}):u_j\} \; \text{ for all $j=1,\ldots,m$}.$$
In \cite[Lemma 1.5]{HT} a minimal multigraded free resolution $\GG(J)$ of $J$ is given  as follows:  the $S$-module $G_i(J)$ in  homological degree $i$ of $\GG(J)$ is the multigraded free $S$-module whose basis is formed by the symbols
$$f(\sigma;u)\; \text{ with } u\in G(J),\; \sigma\subseteq\set(u), \text{ and } |\sigma|=i.$$
 Here $\deg f(\sigma;u)=\mathbf{\sigma}+\deg u$ where $\sigma$ is identified with the $(0,1)$-vector in $ \mathbb{N}^n$ whose $k$-th component is $1$ if and only if $k\in \sigma$. The augmentation map $\varepsilon:G_0\to J$ is defined by $\varepsilon(f(\emptyset;u))=u$.

 In \cite{HT} the chain map of $\GG(J)$ for a class of such ideals is  described. We first recall some definitions needed to present this chain map. Let $M(J)$ be the set of all monomials belong to $J$. The map $g\: M(J)\to G(J)$ is defined as follows:  $g(u) = u_j$
if $j$ is the smallest number such that $u \in (u_1,\ldots,u_j)$. This map is called the {\em decomposition function of $J$ }. The {\em complementary factor} $c(u)$ is defined to be the monomial for which $u=c(u)g(u)$.

The decomposition function $g$ is called {\em regular} if $\set(g(x_su))\subseteq \set(u)$ for all $s\in \set(u)$ and $u\in G(J)$. By \cite[Theorem 1.12]{HT}  if the decomposition function of $J$ is regular, then the chain map $\partial$ of $\GG(J)$ is given by
\[
\partial(f(\sigma;u))= -\sum_{t\in \sigma} (-1)^{\alpha(\sigma,t)}x_t f(\sigma\setminus t;u)
+\sum_{t\in \sigma}(-1)^{\alpha(\sigma,t)}\frac{x_t u}{g(x_t u)}f(\sigma\setminus t;g(x_t u)).
\]
Here the definition  is extended by setting $f(\sigma;u)=0$ if  $\sigma \not\subseteq \set(u)$, and
$$\alpha(\sigma,t)=|\{s\in \sigma\: s<t\}|.$$

Now let  $P\subseteq S$ be a monomial prime ideal and $a$ be a positive integer. Considering the lexicographic order on the minimal set of monomial generators  of $P^a$ by the ordering $x_1>\cdots>x_n$, the ideal $P^a$ has linear quotients. Moreover, $\set(u)=\{1,\ldots,m(u)-1\}$ for all $u$ in the minimal set of monomial generators of $P^a$, where $$m(u)=\max\{i\in [n]\: x_i| u\}.$$ We denote the decomposition function of $P^a$ by $g_a$ and the complementary factor of each monomial $u$ by $c_a(u)$. The decomposition function  $g_a$ is regular. Hence one can apply the above result to find the chain map of the minimal free resolution of $P^a$.
We denote by $(\GG(P^a), \partial)$ the resolution of the $a$-th power of a monomial prime ideal $P$ obtained in this way.

\medskip
Let $u\in P^a$ be a monomial. We may write $u$ uniquely in the form $u=v\prod_{k=1}^dx_{j_k}$ such that $v\not\in P$,   $x_{j_k}\in P$ for all $k$, and $j_1\leq j_2\leq \cdots \leq j_d$.  Observe that with the above mentioned ordering on the minimal set of monomial generators of $P^a$ we obtain
\begin{eqnarray}
\label{formula}
g_a(u)=\prod_{k=1}^a x_{j_k}.
\end{eqnarray}

\begin{Lemma}
\label{dec.map}
Let $P$ be a monomial prime ideal and $u$ be a monomial in the minimal set of monomial generators of $P^{a}$. If $a\geq b$, then
$$g_{b}(g_{a}(x_tu))=g_{b}(x_tg_{b}(u))\;\; \text{ for all $t$}.$$
\end{Lemma}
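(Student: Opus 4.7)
The plan is to use the explicit formula (\ref{formula}) to describe $g_a$ and $g_b$ combinatorially, then verify the identity by comparing multisets of $P$-indices. The first step is to observe a useful composition identity: formula~(\ref{formula}) says that $g_a(v)$ selects (after discarding the non-$P$ part of $v$) the $a$ smallest-indexed $P$-variables of $v$, counted with multiplicity. Since taking the $a$ smallest and then the $b$ smallest (with $a \geq b$) gives the same output as taking the $b$ smallest directly, we obtain
\[
g_b \circ g_a = g_b \quad \text{on } P^a.
\]
Because $u \in G(P^a) \subseteq P^a$ implies $x_t u \in P^a$, this yields $g_b(g_a(x_t u)) = g_b(x_t u)$. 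Thus the lemma reduces to the identity $g_b(x_t u) = g_b(x_t g_b(u))$.

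To prove this reduced identity, I would write $u = \prod_{k=1}^{a} x_{j_k}$ with $x_{j_k} \in P$ and $j_1 \leq \cdots \leq j_a$, so that $g_b(u) = \prod_{k=1}^{b} x_{j_k}$, and the ``tail'' factor $u/g_b(u) = \prod_{k=b+1}^{a} x_{j_k}$ consists of $P$-variables whose indices are all $\geq j_b$. Splitting into two cases depending on whether $x_t \in P$ or $x_t \notin P$, the multiset of $P$-indices of $x_t u$ differs from that of $x_t g_b(u)$ by exactly the extra entries $j_{b+1}, \ldots, j_a$. Applying formula~(\ref{formula}), both $g_b(x_t u)$ and $g_b(x_t g_b(u))$ become the product of the $b$ smallest entries (with multiplicity) of their respective multisets of $P$-indices, so the task is to show these $b$ smallest entries agree.

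The main combinatorial claim is therefore: if $j_1 \leq \cdots \leq j_a$, then the $b$ smallest entries of the multisets $\{t, j_1, \ldots, j_a\}$ and $\{t, j_1, \ldots, j_b\}$ coincide (and likewise without $t$). This holds because the $b$-th smallest entry of the smaller multiset is at most $j_b$, while every additional entry $j_{b+1}, \ldots, j_a$ has value $\geq j_b$; adding elements no smaller than the current $b$-th smallest cannot displace any of the $b$ smallest positions. The subtle point I anticipate is bookkeeping with ties---for instance, when $j_b = j_{b+1}$ or when $t$ equals some $j_k$---but the multiset formulation makes this transparent, since any newly inserted copy of $j_b$ is interchangeable with one already present and so does not change the multiset of $b$ smallest entries.
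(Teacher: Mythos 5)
Your proof is correct and takes essentially the same route as the paper's: both arguments rest on the explicit formula~(\ref{formula}), the observation that the tail indices $j_{b+1},\ldots,j_a$ are all $\geq j_b$, and a comparison of the $b$ smallest index entries that comes down to a case split on the position of $t$ relative to $j_b$. Your intermediate reduction $g_b\circ g_a = g_b$ on $P^a$ is a tidy organizing step that the paper folds into a single direct computation of both sides of the identity.
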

\begin{proof}
Let $u=\prod_{k=1}^a x_{j_k}$ with $j_1\leq \cdots \leq j_a$. Then using  formula (\ref{formula}) for $g_a$ and $g_b$ one has $g_{b}(g_{a}(x_tu))$ and $g_{b}(x_tg_{b}(u))$ are both equal to $x_t(\prod_{k=1}^{b-1} x_{j_k})$ if  $t\leq j_b$. Otherwise they are both equal to  $\prod_{k=1}^b x_{j_k}$.
\end{proof}

Let $P\subseteq S$ be a monomial prime ideal and consider  the resolutions $\GG(P^a)$ and $\GG(P^b)$. Suppose that $a\geq b$. For each $s$ we define the map $\varphi_s^{a,b}\: G_s(P^a)\to G_s(P^b)$, between the modules in homological degree $s$ of $\GG(P^a)$ and $\GG(P^b)$,  to be $$\varphi_s^{a,b}(f(\sigma;u))= c_b(u)f(\sigma;g_b(u)).$$
Here, as before, we set $f(\sigma;g_b(u))=0$ if $\sigma \not\subseteq \set(g_b(u))$.
We have the following result:

\begin{Proposition}
\label{lifting}
 The map $\varphi^{a, b}=(\varphi_s^{a,b})\: \GG(P^a)\to \GG(P^b)$ is a lifting of the inclusion $S$-homomorphism $\iota\: P^{a} \to P^{b}$, that is, a complex homomorphism with $H_0(\varphi^{a,b})=\iota$.
\end{Proposition}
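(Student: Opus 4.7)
The proof is essentially a direct computation, so the plan is first to discharge the easy compatibilities and then verify the chain-map identity using Lemma~\ref{dec.map} in a central way. First I would check that each $\varphi_s^{a,b}$ is a well-defined multigraded map: since $c_b(u)g_b(u)=u$, the multidegree of $c_b(u)f(\sigma;g_b(u))$ equals $\sigma+\deg u=\deg f(\sigma;u)$. The $H_0$ statement is then immediate from
\[
\varepsilon_b\bigl(\varphi_0^{a,b}(f(\emptyset;u))\bigr)=\varepsilon_b\bigl(c_b(u)f(\emptyset;g_b(u))\bigr)=c_b(u)g_b(u)=u,
\]
which agrees with $\iota(\varepsilon_a(f(\emptyset;u)))=u$, so after passing to cokernels $\varphi^{a,b}$ induces the inclusion.

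The core step is the chain-map identity $\partial\circ\varphi_s^{a,b}=\varphi_{s-1}^{a,b}\circ\partial$ for $s\geq 1$. I would apply both sides to a basis element $f(\sigma;u)$ and expand using the explicit chain-map formula of \cite[Theorem 1.12]{HT} (which applies because $g_a$ and $g_b$ are regular). On the left one obtains
\[
-\sum_{t\in\sigma}(-1)^{\alpha(\sigma,t)}x_t\,c_b(u)\,f(\sigma\setminus t;g_b(u))+\sum_{t\in\sigma}(-1)^{\alpha(\sigma,t)}\frac{x_t\,c_b(u)\,g_b(u)}{g_b(x_tg_b(u))}\,f\bigl(\sigma\setminus t;g_b(x_tg_b(u))\bigr),
\]
and on the right
\[
-\sum_{t\in\sigma}(-1)^{\alpha(\sigma,t)}x_t\,c_b(u)\,f(\sigma\setminus t;g_b(u))+\sum_{t\in\sigma}(-1)^{\alpha(\sigma,t)}\frac{x_t u}{g_a(x_tu)}\,c_b(g_a(x_tu))\,f\bigl(\sigma\setminus t;g_b(g_a(x_tu))\bigr).
\]
The ``linear'' sums match at once using $c_b(u)g_b(u)=u$. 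For the ``correction'' sums, Lemma~\ref{dec.map} gives $g_b(x_tg_b(u))=g_b(g_a(x_tu))$, so the symbols $f(\sigma\setminus t;\cdot)$ on the two sides coincide. The coefficients match because by definition of the complementary factor $g_a(x_tu)=c_b(g_a(x_tu))\cdot g_b(g_a(x_tu))$, and substituting this identity into the right-hand coefficient $x_tu\cdot c_b(g_a(x_tu))/g_a(x_tu)$ and simplifying via Lemma~\ref{dec.map} reproduces the left-hand coefficient $x_t u/g_b(x_tg_b(u))$.

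The delicate point, and the main obstacle I expect, is the zero convention $f(\tau;v)=0$ when $\tau\not\subseteq\set(v)$. For $u\in G(P^a)$ with $m(u)=j_a$ and $\sigma\subseteq\set(u)=\{1,\dots,j_a-1\}$, the set $\set(g_b(u))=\{1,\dots,j_b-1\}$ is typically strictly smaller, so some terms $f(\sigma\setminus t;g_b(u))$ may vanish on the $\partial\varphi$-side while the corresponding $f(\sigma\setminus t;g_b(g_a(x_tu)))$ on the $\varphi\partial$-side need not. I would handle this by a direct case analysis on the relative positions of $t$, $j_b$, and $j_a$, using formula~(\ref{formula}) to track $m(g_a(x_tu))$ and $m(g_b(x_tg_b(u)))$ explicitly, and observing that whenever a symbol on one side is forced to zero, its partner on the other side is either killed by the same $\set$-condition or cancels against the adjacent linear term via the identity $c_b(u)g_b(u)=u$.
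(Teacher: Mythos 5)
Your proposal follows the same route as the paper: check the $H_0$ square, expand both $\varphi_{s-1}\circ\partial$ and $\partial\circ\varphi_s$ on a basis element $f(\sigma;u)$ via the explicit differential of \cite[Theorem 1.12]{HT}, observe the linear sums coincide, invoke Lemma~\ref{dec.map} to identify the symbols in the correction sums, and then verify the scalar identity $u\,c_b(g_a(x_tu))/g_a(x_tu)=c_b(u)g_b(u)/g_b(x_tg_b(u))$. That is exactly the paper's argument.

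Where you differ is the extra worry about the zero convention $f(\tau;v)=0$ for $\tau\not\subseteq\set(v)$, for which you anticipate a case analysis on $t,j_b,j_a$. This caution is reasonable to raise, but the case analysis is unnecessary; there is a one-line reason the formal bookkeeping is legitimate. For powers of a monomial prime one has $\set(v)=\{1,\dots,m(v)-1\}$ and, from formula~(\ref{formula}), $m(g_b(v))\leq m(v)$ and $m(g_a(x_tu))\leq m(u)$. Consequently $\set(g_b(v))\subseteq\set(v)$, which shows that the defining formula $\varphi_s^{a,b}(f(\sigma;v))=c_b(v)f(\sigma;g_b(v))$ sends a vanishing symbol to a vanishing symbol; and similarly $\set(g_a(x_tu))\subseteq\set(u)$, together with the observation that the two $t=s$ terms cancel when $s\geq m(u)$ (since then $g_a(x_su)=u$ and $x_su/g_a(x_su)=x_s$), shows that the displayed formula for $\partial$ already outputs $0$ whenever fed a zero symbol. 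Thus both maps are compatible with the zero convention, and the formal term-by-term comparison is valid with no cases to split. I would also flag that your specific worry --- that $f(\sigma\setminus t;g_b(u))$ might vanish while $f(\sigma\setminus t;g_b(g_a(x_tu)))$ need not --- compares a ``linear'' symbol on one side to a ``correction'' symbol on the other; after Lemma~\ref{dec.map} the correction symbols on the two sides are literally equal, the linear symbols are literally equal, and since $m(g_b(g_a(x_tu)))\leq m(g_b(u))$ the containment actually goes the other way: if anything a correction symbol may vanish when the corresponding linear one does not, and this causes no trouble because the two sides vanish together.
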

\begin{proof}
In this proof, for simplicity, we denote $\varphi_s^{a,b}$ by $\varphi_s$.  First observe that the following diagram is commutative:
$$
\begin{CD}
G_0(P^a) @>\varepsilon>> P^a\\
@V\varphi_0 VV  @VV\iota V \\
G_0(P^b) @>\varepsilon>> P^b
\end{CD}
$$
In fact for each element $f(\emptyset;u)$ in the basis of $G_0(P^a)$ we have
$$\varepsilon\varphi_0(f(\emptyset;u))=\varepsilon(c_b(u)f(\emptyset;g_b(u)))=c_b(u)g_b(u)=u, $$
 and also $\iota\varepsilon(f(\emptyset;u))=u$. Therefore, the above diagram is commutative.

Next we show that the following diagram is commutative for all $s\geq 1$:
$$
\begin{CD}
G_s(P^a) @>\partial>> G_{s-1}(P^a)\\
@V\varphi_s VV  @VV\varphi_{s-1} V \\
G_s(P^b) @>\partial>> G_{s-1}(P^b)
\end{CD}
$$
For each element $f(\sigma;u)$ in the basis of $G_s(P^a)$ one has
\begin{eqnarray*}
\varphi_{s-1}(\partial(f(\sigma;u)))
&=&\varphi_{s-1}(-\sum_{t\in \sigma} (-1)^{\alpha(\sigma,t)}x_t f(\sigma\setminus t;u)\\
&&\; \;\;\;\;\;\;\;+\sum_{t\in \sigma}(-1)^{\alpha(\sigma,t)}\frac{x_t u}{g_a(x_t u)}f(\sigma\setminus t;g_a(x_t u)))\\
&= &-\sum_{t\in \sigma}(-1)^{\alpha(\sigma,t)}x_t c_b(u) f(\sigma\setminus t;g_b(u))\\
&&+\sum_{t\in \sigma}(-1)^{\alpha(\sigma,t)}\frac{x_t u}{g_a(x_t u)} c_b(g_a(x_t u)) f(\sigma\setminus t;g_b(g_a(x_t u))),
\end{eqnarray*}
and on the other hand
\begin{eqnarray*}
\partial(\varphi_s(f(\sigma;u)))&= &\partial(c_b(u)f(\sigma; g_b(u)))=\\\
&-&\sum_{t\in \sigma}(-1)^{\alpha(\sigma,t)}x_t c_b(u) f(\sigma\setminus t;g_b(u))\\
&+&\sum_{t\in \sigma}(-1)^{\alpha(\sigma,t)}c_b(u)\frac{x_tg_b(u)}{g_b(x_tg_b(u))} f(\sigma\setminus t;g_b(x_tg_b(u)).
\end{eqnarray*}
\\
By Lemma \ref{dec.map} we have $$f(\sigma\setminus t;g_b(g_a(x_t u)))=f(\sigma\setminus t;g_b(x_tg_b(u)).$$
Hence we only need to show that
$$\frac{ u c_b(g_a(x_t u))}{g_a(x_t u)}=\frac{c_b(u)g_b(u)}{g_b(x_tg_b(u))}.$$
Using the facts $u=c_b(u)g_b(u)$ and $g_a(x_t u)=c_b(g_a(x_t u))g_b(g_a(x_t u))$, one has
\begin{eqnarray*}
\frac{ u\; c_b(g_a(x_t u))}{g_a(x_t u)}=\frac{c_b(u)g_b(u)\;c_b(g_a(x_t u))}{c_b(g_a(x_t u)) \; g_b(g_a(x_t u))}
=\frac{c_b(u)g_b(u)}{g_b(g_a(x_t u))}
=\frac{c_b(u)g_b(u)}{g_b(x_t g_b(u))},
\end{eqnarray*}
where the last equality is obtained by Lemma \ref{dec.map}.
\end{proof}

\begin{Proposition}
\label{mostimportant}
Let $a\geq b\geq c$ be nonnegative integers. Then $\varphi^{a,c}= \varphi^{b,c}\circ \varphi^{a,b}$.
\end{Proposition}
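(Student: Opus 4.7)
The plan is to verify the equality directly on the basis elements of each $G_s(P^a)$, since both sides are morphisms of complexes determined by their action on a basis. Unfolding the definitions, for a basis element $f(\sigma;u)$ with $u\in G(P^a)$ the composite reads
$$\varphi_s^{b,c}\bigl(\varphi_s^{a,b}(f(\sigma;u))\bigr) = c_b(u)\,c_c(g_b(u))\,f\bigl(\sigma;\,g_c(g_b(u))\bigr),$$
while $\varphi_s^{a,c}(f(\sigma;u)) = c_c(u)\,f(\sigma;\,g_c(u))$. So the proposition reduces to the two identities
$$g_c(g_b(u)) = g_c(u) \qquad \text{and} \qquad c_b(u)\,c_c(g_b(u)) = c_c(u).$$

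Both identities would be established by the explicit formula~(\ref{formula}) for the decomposition function of a power of a monomial prime ideal. Writing $u = v\prod_{k=1}^d x_{j_k}$ uniquely with $v\notin P$, $x_{j_k}\in P$ and $j_1\leq\cdots\leq j_d$ (where $d\geq a$), formula~(\ref{formula}) gives $g_b(u) = \prod_{k=1}^b x_{j_k}$. This monomial has the analogous factorization with trivial non-$P$-part, so a second application of formula~(\ref{formula}) yields $g_c(g_b(u)) = \prod_{k=1}^c x_{j_k} = g_c(u)$. The complementary-factor identity then follows from the universal relation $c_e(w) = w/g_e(w)$ by a telescoping product:
$$c_b(u)\,c_c(g_b(u)) = \frac{u}{g_b(u)}\cdot\frac{g_b(u)}{g_c(g_b(u))} = \frac{u}{g_c(u)} = c_c(u).$$

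The last point I would address is the convention $f(\sigma;w)=0$ when $\sigma\not\subseteq\set(w)$. Since $m(g_b(u)) = j_b \leq j_a = m(u)$, one has the nested inclusions $\set(g_c(u)) \subseteq \set(g_b(u)) \subseteq \set(u)$; combined with the identity $g_c(g_b(u))=g_c(u)$ this forces both sides to vanish on exactly the same basis elements and to agree on the rest. I do not anticipate any real obstacle: the whole argument is a short bookkeeping computation driven by formula~(\ref{formula}) and simpler in spirit than Lemma~\ref{dec.map}, with the only mild subtlety being the case analysis dictated by the $\set$-convention.
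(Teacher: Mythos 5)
Your argument is correct and matches the paper's proof essentially step for step: reduce to the two identities $g_c(g_b(u)) = g_c(u)$ and $c_b(u)\,c_c(g_b(u)) = c_c(u)$, get the first from formula~(\ref{formula}), and get the second by the same telescoping computation $c_b(u)\cdot g_b(u)/g_c(g_b(u)) = u/g_c(u)$. Your additional paragraph about the $\set$-convention (using the nested inclusions $\set(g_c(u))\subseteq\set(g_b(u))\subseteq\set(u)$) is a nice bit of extra care that the paper leaves implicit; the only small slip is the parenthetical ``$d\geq a$'' --- since $u\in G(P^a)$ is a minimal generator, its non-$P$ part is trivial and $d=a$ exactly, though this does not affect the argument.
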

\begin{proof}
We must show that for all $s$ if $f(\sigma;u)$ is an  element in the basis of $G_s(P^a)$, then
$$c_c(u)f(\sigma;g_c(u))= c_b(u)\;c_c(g_b(u))f(\sigma;g_c(g_b(u))).$$
First observe that by formula given in (\ref{formula})
$$g_c(g_b(u))=g_c(u).$$
Moreover,
$$c_b(u)\;c_c(g_b(u))=c_b(u)\frac{g_b(u)}{g_c(g_b(u))}=\frac{u}{g_c(g_b(u))}=\frac{u}{g_c(u)}=c_c(u).$$
\end{proof}

Let $\xb^{\ab}\in S=K[x_1,\ldots,x_n]$ be a monomial. Recall that the expansion of ideals are given with respect to the fixed $n$-tuple $(i_1,\ldots,i_n)$, and as before we set $P_j=(x_{j1},x_{j2},\ldots,x_{ji_j})\subseteq S^*$.  We define the complex $\GG^{\ab}$ to be  $$\GG^{\ab}=\bigotimes_{j=1}^n \GG(P_j^{a(j)}).$$

\begin{Proposition}
\label{minimality2}
The complex $\GG^{\ab}$ is a minimal free resolution of $(\xb^{\ab})^*$.
\end{Proposition}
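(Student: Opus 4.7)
The plan is to exploit the fact that the prime ideals $P_1,\ldots,P_n$ are supported on pairwise disjoint sets of variables. Write $R_j=K[x_{j1},\ldots,x_{ji_j}]$ so that $S^{*}=R_1\tensor_K\cdots\tensor_K R_n$, and view each complex $\GG(P_j^{a(j)})$ as a complex of multigraded free $R_j$-modules. Because a $K$-tensor product of $R_j$-free modules is $S^{*}$-free (of rank equal to the product of the $R_j$-ranks), the complex $\GG^{\ab}=\bigotimes_{j=1}^n\GG(P_j^{a(j)})$ is automatically a complex of multigraded free $S^{*}$-modules.

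First I would identify $(\xb^{\ab})^{*}$ with a $K$-tensor product of the pieces. The multiplication map
\[
\mu\colon P_1^{a(1)}\tensor_K\cdots\tensor_K P_n^{a(n)} \To P_1^{a(1)}\cdots P_n^{a(n)}=(\xb^{\ab})^{*}
\]
is a $K$-linear bijection on monomial bases: since the $P_j$ involve disjoint variables, every monomial $u\in(\xb^{\ab})^{*}$ admits a unique factorization $u=u_1\cdots u_n$ with $u_j$ a monomial of $P_j^{a(j)}$. Hence $\mu$ is an isomorphism of multigraded $S^{*}$-modules.

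Next, each $\GG(P_j^{a(j)})$ is an $R_j$-free resolution of $P_j^{a(j)}$ by \cite[Lemma 1.5]{HT}, and free modules are $K$-flat, so the iterated Künneth formula yields
\[
H_s(\GG^{\ab})\iso\bigoplus_{s_1+\cdots+s_n=s}\bigotimes_{j=1}^n H_{s_j}(\GG(P_j^{a(j)})).
\]
All summands involving some $s_j>0$ vanish, and so $H_0(\GG^{\ab})\iso\bigotimes_j P_j^{a(j)}\iso(\xb^{\ab})^{*}$ via $\mu$, while $H_s(\GG^{\ab})=0$ for $s\geq 1$. For minimality, the explicit formula for $\partial$ recalled above shows that every entry of the differential of $\GG(P_j^{a(j)})$ lies in $P_j\subseteq\mm_{S^{*}}$; consequently the total differential $\sum_j\pm\,1\tensor\cdots\tensor\partial^{(j)}\tensor\cdots\tensor 1$ has all entries in $\mm_{S^{*}}$, so $\GG^{\ab}$ is a \emph{minimal} resolution.

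I expect the main obstacle to be setting up the Künneth step carefully: although each $\GG(P_j^{a(j)})$ is only a resolution over the smaller ring $R_j$, one needs to verify that its $K$-tensor product really is an $S^{*}$-free complex and that homology commutes with the $K$-tensor product in this disjoint-variable situation. Once this bookkeeping is in place, both the identification via $\mu$ and the minimality check follow immediately.
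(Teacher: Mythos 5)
Your proof is correct, but it follows a genuinely different route from the paper's. The paper reduces Proposition~\ref{minimality2} to a general Lemma~\ref{product}: if $J_1,\ldots,J_r$ are (multi)graded ideals with $\Tor_i^S(S/(J_1\cdots J_{k-1}),S/J_k)=0$ for $i>0$, then the tensor product over the ambient polynomial ring of their minimal free resolutions resolves $J_1\cdots J_r$; that lemma is proved by a short-exact-sequence-of-complexes argument. Disjointness of the variable blocks of the $P_j$ supplies the required Tor vanishing. You instead use the decomposition $S^*=R_1\tensor_K\cdots\tensor_K R_n$ and run a K\"unneth argument over the field $K$, where flatness is automatic and no Tor hypothesis is needed; minimality comes for free since the differentials of each factor already lie in $P_j\subseteq\mm^*$. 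This is leaner for the case at hand, though it requires one piece of bookkeeping you flag yourself but do not fully carry out: the paper's $\GG(P_j^{a(j)})$ is a complex of free $S^*$-modules, so it is not literally ``a complex of $R_j$-free modules.'' What you need is that it equals the base change to $S^*$ of the minimal $R_j$-free resolution $\GG_j'$ of the $R_j$-ideal $P_j^{a(j)}\cap R_j$, and then that $\bigotimes_{S^*}\GG(P_j^{a(j)})\iso\bigotimes_K\GG_j'$. Similarly, in the displayed map $\mu$ the factors $P_j^{a(j)}$ must be read as ideals of $R_j$ (otherwise $\mu$ fails to be injective, since a monomial of $S^*$ factors non-uniquely into two $S^*$-monomials). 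With those two identifications made explicit, the K\"unneth computation and the minimality observation are complete, and you recover exactly the paper's conclusion without invoking the more general Lemma~\ref{product}. The trade-off is that the paper's Lemma~\ref{product} is also reused later (e.g.\ in Proposition~\ref{principal}), so the paper gets additional mileage from stating it in the Tor-vanishing form.
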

\begin{proof}
The ideal $(\xb^{\ab})^*$ is the product of the ideals $P_j^{a(j)}$. Since the minimal generators of the ideals $P_j$ are in pairwise distinct sets of variables, the assertion follows from the next simple fact.
\end{proof}

\begin{Lemma}
\label{product}
Let  $J_1,\ldots,J_r$  be  {\em (}multi{\em )}graded ideals of a polynomial ring $S$ with the property that  $$\Tor_i^S(S/(J_1\cdots J_{k-1}),S/J_{k})=0\quad  \text{for all $k=2,\ldots,r$ and  all $i>0$}.$$
Let  $\GG_k$ be a {\em (}multi{\em )}graded minimal free resolution of $J_k$ for $k=1,\ldots,r$. Then $\bigotimes_{k=1}^r\GG_k$ is a  {\em (}multi{\em )}graded minimal free resolution of $J_1\cdots J_k$.
\end{Lemma}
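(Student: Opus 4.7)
My plan is to induct on $r$. The case $r=1$ is immediate. For the inductive step, set $M = J_1 \cdots J_{r-1}$, so by the inductive hypothesis $\mathcal{A} := \bigotimes_{k=1}^{r-1} \GG_k$ is a minimal multigraded free resolution of $M$; let $\mathcal{B} := \GG_r$, which is a minimal multigraded free resolution of $J_r$. I want to show that $\mathcal{A}\tensor \mathcal{B}$ is a minimal multigraded free resolution of $M J_r = J_1\cdots J_r$.

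The key observation is that, since $\mathcal{A}$ and $\mathcal{B}$ are free resolutions of $M$ and $J_r$ respectively, the homology of their tensor product computes $\Tor^S_i(M,J_r)$. So the first step is to deduce from the hypothesis $\Tor^S_i(S/M,S/J_r)=0$ for $i>0$ that $\Tor^S_i(M,J_r)=0$ for $i>0$, and that $M\tensor_S J_r \iso M J_r$. I would do this by two successive dimension shifts: tensoring the short exact sequence $0\to J_r \to S \to S/J_r\to 0$ with $M$ (over a free module, so $\Tor_i(M,S)=0$ for $i\geq 1$) gives $\Tor_i^S(M,J_r)\iso \Tor_{i+1}^S(M,S/J_r)$ for $i\geq 1$; and tensoring $0\to M\to S\to S/M\to 0$ with $S/J_r$ gives $\Tor_{i+1}^S(M,S/J_r)\iso \Tor_{i+2}^S(S/M,S/J_r)$ for $i\geq 0$. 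Composing these isomorphisms yields $\Tor_i^S(M,J_r)\iso \Tor_{i+2}^S(S/M,S/J_r)=0$ for $i\geq 1$. For $i=0$, a parallel argument shows $\Tor_1^S(S/M,J_r)\iso \Tor_2^S(S/M,S/J_r)=0$, and from the sequence $0\to \Tor_1^S(S/M,J_r)\to M\tensor_S J_r\to J_r$ the map $M\tensor_S J_r\to J_r$ is injective with image $MJ_r$.

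Granted this, $\mathcal{A}\tensor \mathcal{B}$ is a multigraded free resolution of $J_1\cdots J_r$. For minimality, I would invoke the standard fact that the differential of a tensor product of complexes, $d_{\mathcal{A}}\tensor 1 \pm 1\tensor d_{\mathcal{B}}$, has its entries in the maximal homogeneous ideal $\mathfrak{m}$ whenever the differentials of $\mathcal{A}$ and $\mathcal{B}$ do; minimality of $\mathcal{A}$ (from induction) and of $\mathcal{B}$ therefore gives minimality of $\mathcal{A}\tensor \mathcal{B}$.

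The main conceptual step is the Tor vanishing translation in the first paragraph; everything else is bookkeeping. Note that the $r=2$ case subsumes the actual use in Proposition~\ref{minimality2}, where the hypothesis $\Tor_i^S(S/(J_1\cdots J_{k-1}),S/J_k)=0$ is guaranteed because the $J_k = P_j^{a(j)}$ involve pairwise disjoint sets of variables, so the resolutions of $S/(J_1\cdots J_{k-1})$ and $S/J_k$ use disjoint variables and their tensor product is already a resolution.
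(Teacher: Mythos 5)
Your argument is correct, and it takes a genuinely different — and arguably cleaner — route than the paper's. The paper reduces to two factors, takes minimal free resolutions $\FF$ and $\GG$ of $S/I$ and $S/J$, shifts them to resolutions $\widetilde{\FF}$, $\widetilde{\GG}$ of $I$, $J$, and introduces the subcomplex $\DD\subseteq\FF\otimes\GG$ whose component in degree $k>0$ is $F_0\otimes G_k\oplus F_k\otimes G_0$. It then chases two short exact sequences of complexes, $0\to\DD\to\FF\otimes\GG\to(\widetilde{\FF}\otimes\widetilde{\GG})[-2]\to 0$ and $0\to F_0\otimes\GG\to\DD\to(\widetilde{\FF}\otimes G_0)[-1]\to 0$, through their long exact homology sequences, using $H_i(\FF\otimes\GG)=\Tor_i^S(S/I,S/J)=0$ for $i>0$ and $(I\cap J)/IJ=\Tor_1^S(S/I,S/J)=0$ to identify $H_1(\DD)\cong IJ$. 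You bypass all of this subcomplex bookkeeping by resolving $M=J_1\cdots J_{r-1}$ and $J_r$ directly, so that $H_i(\mathcal{A}\otimes\mathcal{B})=\Tor_i^S(M,J_r)$ on the nose; the only remaining work is the double dimension shift giving $\Tor_i^S(M,J_r)\cong\Tor_{i+2}^S(S/M,S/J_r)$ for $i\geq 1$, together with the parallel shift $\Tor_1^S(S/M,J_r)\cong\Tor_2^S(S/M,S/J_r)=0$, which identifies $M\otimes_S J_r$ with $MJ_r$. Both arguments rest on the same Tor input, and both close with the same (standard, and in the paper essentially unstated) observation that the differential of a tensor product of minimal complexes has entries in $\mathfrak{m}$. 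Your final remark that the application in Proposition~\ref{minimality2}, powers of primes in pairwise disjoint variable sets, is a degenerate instance of the hypothesis is also accurate.
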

\begin{proof}
Let $I$ and $J$ be graded ideals of $S$ with $\Tor_i^S(S/I,S/J)=0$ for all $i>0$, and let $\FF$ and $\GG$ be respectively graded minimal free resolutions of $S/I$ and $S/J$. Suppose that $\widetilde{\FF}$ is the minimal free resolution of $I$ obtained by $\FF$, that is, we have $\widetilde{F_i}=F_{i+1}$ in each homological degree $i\geq 0$ of $\widetilde{\FF}$. In the same way, we consider the resolution $\widetilde{\GG}$ for $J$. We will show that $\widetilde{\FF}\otimes \widetilde{\GG}$ is a graded minimal free resolution of $IJ$. Then by induction on the number of ideals $J_1,\ldots,J_r$ we conclude the desired statement.

First observe that $F_0\cong S\cong G_0$ because $\FF$ and $\GG$ are  graded minimal free resolutions of $S/I$ and $S/J$. Now let $\DD$ be the subcomplex of $\FF\otimes \GG$ whose $k$-th component is
$F_0\otimes G_k \oplus F_k\otimes G_0$ for all $k>0$ and its zeroth component is $F_0\otimes G_0$. Hence $\widetilde{\FF}\otimes \widetilde{\GG}\cong ((\FF\otimes \GG) /\DD)[2]$. If $\iota\: \DD\to \FF\otimes \GG$ is the natural inclusion map, then  we have the following short exact sequence of the complexes:
\begin{eqnarray}
\label{short}
0\to \DD \overset{\iota}{\to} \FF\otimes \GG \to (\widetilde{\FF}\otimes \widetilde{\GG})[-2] \to 0.
\end{eqnarray}

Since $\Tor_i^S(S/I,S/J)=0$ for all $i>0$, we have $H_i(\FF\otimes \GG)=0$ if $i>0$; see for example \cite[Theorem 10.22]{R} . Furthermore,  $H_i(\DD)=0$ for all $i\geq 2$. Hence  by the  long exact sequence of homology modules arising from (\ref{short})  one has $H_i(\widetilde{\FF}\otimes \widetilde{\GG})=0$ if $i\neq 0$, and $H_0(\widetilde{\FF}\otimes \widetilde{\GG})=H_2(\widetilde{\FF}\otimes \widetilde{\GG})[-2] \iso H_1(\DD)$. So we only need to show that $H_1(\DD)\cong IJ$.

The complex $F_0\otimes \GG$ may be considered as a subcomplex of $\DD$ by natural inclusion map $\iota': F_0\otimes \GG \to \DD$. Then
$\widetilde{\FF}\otimes G_0 \cong (\DD /(F_0\otimes \GG))[1]$,  and we have the following short exact sequence of complexes:
\begin{eqnarray}
\label{short2}
0\to F_0\otimes \GG \overset{\iota'}{\to} \DD \to (\widetilde{\FF}\otimes G_0)[-1] \to 0.
\end{eqnarray}

The short exact sequence (\ref{short2}) yields the following exact sequence of homology modules
\[
0\to  H_1(\DD) \to  H_1((\widetilde{\FF}\otimes G_0)[-1]) \to H_0(F_0\otimes \GG) \to  H_0(\DD) \to 0
\]
which is isomorphic to
\[
0 \to H_1(\DD) \to I \to  S/J \to S/(I+J) \to 0.
\]
Thus $H_1(\DD)\cong I\cap J$. Now $(I\sect J)/(IJ)=\Tor_1^S(S/I,S/J)=0$; see for example \cite[Proposition 10.20]{R}. Hence  $I\sect J=IJ$ and consequently $H_1(\DD)\cong IJ$, as desired.
\end{proof}

Let $\xb^{\ab},\xb^{\bb}\in S$ be monomials such that $\xb^{\bb}|\xb^{\ab}$,  that is, $b(j)\leq a(j)$ for all $j=1,\ldots,n$. The  complex homomorphisms $\varphi^{a(j), b(j)}\: \GG(P_j^{a(j)})\to \GG(P_j^{b(j)})$  induce  the complex homomorphism $\varphi^{\ab,\bb}\: \GG^{\ab} \to \GG^{\bb}$ by
$\varphi^{\ab,\bb}=\bigotimes_{j=1}^n \varphi^{a(j), b(j)}$. Then $\varphi^{\ab,\bb}$ is a lifting of the inclusion map $\iota\: (\xb^{\ab})^* \to (\xb^{\bb})^*$.

\begin{Lemma}
\label{composition}
Let $\xb^{\ab},\xb^{\bb},\xb^{\cb}\in S$ be  monomials such that $\xb^{\bb}|\xb^{\ab}$ and $\xb^{\cb}|\xb^{\bb}$.
Then $$(\varphi^{\bb,\cb})\circ (\varphi^{\ab,\bb})= \varphi^{\ab,\cb}.$$
\end{Lemma}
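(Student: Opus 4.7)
The plan is to reduce the identity to the single-prime case already settled in Proposition~\ref{mostimportant}, exploiting the fact that $\varphi^{\ab,\bb}$ is defined factor-by-factor as a tensor product. By construction
\[
\varphi^{\ab,\bb}=\bigotimes_{j=1}^n \varphi^{a(j),b(j)} \quad\text{and}\quad \varphi^{\bb,\cb}=\bigotimes_{j=1}^n \varphi^{b(j),c(j)},
\]
so I would first invoke the standard functoriality of the tensor product of complex homomorphisms, namely that for complex maps $f_j\colon A_j\to B_j$ and $g_j\colon B_j\to C_j$ one has
\[
\Bigl(\bigotimes_{j=1}^n g_j\Bigr)\circ\Bigl(\bigotimes_{j=1}^n f_j\Bigr)=\bigotimes_{j=1}^n (g_j\circ f_j).
\]
This is legitimate here because all the maps $\varphi^{a(j),b(j)}$ and $\varphi^{b(j),c(j)}$ involved are honest morphisms of complexes of free modules, and the sign conventions built into the total complex tensor product are compatible with composition.

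Applying this to our situation yields
\[
\varphi^{\bb,\cb}\circ\varphi^{\ab,\bb}=\bigotimes_{j=1}^n\bigl(\varphi^{b(j),c(j)}\circ\varphi^{a(j),b(j)}\bigr).
\]
Next, for each $j$ the three integers $a(j)\geq b(j)\geq c(j)$ satisfy the hypothesis of Proposition~\ref{mostimportant}, applied to the monomial prime ideal $P_j\subseteq S^*$. That proposition gives $\varphi^{b(j),c(j)}\circ\varphi^{a(j),b(j)}=\varphi^{a(j),c(j)}$ for every $j$. Substituting back,
\[
\varphi^{\bb,\cb}\circ\varphi^{\ab,\bb}=\bigotimes_{j=1}^n\varphi^{a(j),c(j)}=\varphi^{\ab,\cb},
\]
which is the claim.

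There is essentially no obstacle beyond what Proposition~\ref{mostimportant} already provides; the only point requiring a moment of care is verifying that the tensor-product definition of $\varphi^{\ab,\bb}$ behaves functorially under composition. If one wished to be pedantic one could compute both sides on a basis element $f(\sigma_1;u_1)\otimes\cdots\otimes f(\sigma_n;u_n)$ of a homological degree of $\GG^{\ab}$: each factor $f(\sigma_j;u_j)$ is transported according to the single-prime rule, and the resulting coefficients $c_{c(j)}(u_j)$ and target basis elements $f(\sigma_j;g_{c(j)}(u_j))$ agree on the nose with what $\varphi^{\ab,\cb}$ produces, by Proposition~\ref{mostimportant}. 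This completes the proof.
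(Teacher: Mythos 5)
Your proof is correct and takes essentially the same route as the paper: the paper's proof is a one-line appeal to Proposition~\ref{mostimportant}, which you have simply unpacked by spelling out the functoriality of the tensor product of degree-zero chain maps (where no sign issues arise) and then applying the single-prime result factor by factor.
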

\begin{proof}
This is a consequence of Proposition~\ref{mostimportant}.
\end{proof}

\begin{Lemma}
\label{minimality}
Let $\xb^{\ab},\xb^{\bb}\in S$ be  monomials such that $\xb^{\bb}|\xb^{\ab}$ and for some $t\in [n]$ $b(t)<a(t)$. Then
each component of $\varphi^{\ab,\bb}$ is minimal, i.e.
$$\varphi_s^{\ab,\bb}(G_s^{\ab})\subseteq \mathfrak{m}^* G_s^{\bb}$$
for all $s$ where $\mathfrak{m}^*$ is the graded maximal ideal of $S^*$.
\end{Lemma}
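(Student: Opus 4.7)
The plan is to unwind the tensor-product definition
$\varphi^{\ab,\bb}=\bigotimes_{j=1}^{n}\varphi^{a(j),b(j)}$
and check that the coefficient appearing in front of each basis element lies in~$\mathfrak{m}^{*}$. First I would recall that an $S^{*}$-basis element in homological degree~$s$ of $\GG^{\ab}=\bigotimes_{j=1}^{n}\GG(P_{j}^{a(j)})$ has the form
\[
f(\sigma_{1};u_{1})\otimes\cdots\otimes f(\sigma_{n};u_{n}),
\]
where $u_{j}\in G(P_{j}^{a(j)})$, $\sigma_{j}\subseteq\set(u_{j})$, and $\sum_{j}|\sigma_{j}|=s$. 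By the definition of $\varphi^{a(j),b(j)}$ preceding Proposition~\ref{mostimportant},
\[
\varphi^{\ab,\bb}\!\bigl(f(\sigma_{1};u_{1})\otimes\cdots\otimes f(\sigma_{n};u_{n})\bigr)
=\Bigl(\prod_{j=1}^{n}c_{b(j)}(u_{j})\Bigr)\,f(\sigma_{1};g_{b(1)}(u_{1}))\otimes\cdots\otimes f(\sigma_{n};g_{b(n)}(u_{n})).
\]

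The crux of the argument is now to observe that $c_{b(t)}(u_{t})\in \mathfrak{m}^{*}$. Indeed, since $u_{t}\in G(P_{t}^{a(t)})$ is a monomial of total degree $a(t)$ in the variables $x_{t1},\ldots,x_{t i_{t}}$, and $g_{b(t)}(u_{t})\in G(P_{t}^{b(t)})$ is a monomial of total degree~$b(t)$, the complementary factor
\[
c_{b(t)}(u_{t})=\frac{u_{t}}{g_{b(t)}(u_{t})}
\]
is a monomial of strictly positive degree $a(t)-b(t)>0$ in the variables generating $P_{t}\subseteq \mathfrak{m}^{*}$. Hence $c_{b(t)}(u_{t})\in\mathfrak{m}^{*}$, and the whole product $\prod_{j}c_{b(j)}(u_{j})$ lies in~$\mathfrak{m}^{*}$.

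Combining these two steps shows that $\varphi_{s}^{\ab,\bb}$ sends every basis element of $G_{s}^{\ab}$ into $\mathfrak{m}^{*}G_{s}^{\bb}$, which is what has to be proved. There is no serious obstacle here; the only point to be careful about is that the hypothesis $b(t)<a(t)$ is used exactly to guarantee that one of the factors $c_{b(j)}(u_{j})$ has positive degree, so that the full scalar is a nonunit—an equality $\ab=\bb$ would instead give the identity map, which of course would not be minimal.
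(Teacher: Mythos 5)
Your proof is correct and follows essentially the same route as the paper: you focus on the complementary factor $c_{b(t)}(u_t)$ for the index $t$ with $b(t)<a(t)$ and observe that it is a monomial of positive degree, hence lies in $\mathfrak{m}^*$. The only cosmetic difference is that you write out the full tensor-product expression and the full coefficient $\prod_j c_{b(j)}(u_j)$, whereas the paper isolates the $t$-th factor and invokes its formula~(\ref{formula}) for the decomposition function to conclude $g_{b(t)}(u_t)\neq u_t$; both arguments establish the same key fact.
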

\begin{proof}
Consider the number $t$ for which $b(t)<a(t)$. For each $s$ one has
\begin{eqnarray}
\label{minformula}
\varphi_s^{a(t),b(t)}(f(\sigma;u))= c_{b(t)}(u)f(\sigma;g_{b(t)}(u))
\end{eqnarray}
for  all $f(\sigma;u)$ in the basis of $G_s({P_t}^{a(t)})$. Since $b(t)<a(t)$, formula (\ref{formula}) implies that $g_{b(t)}(u)\neq u$ in (\ref{minformula}). Hence $c_{b(t)}(u)\in \mathfrak{m}^*$, and consequently
$$\varphi_s^{a(t),b(t)}(f(\sigma;u))\in  \mathfrak{m}^* G_s({P_t}^{b(t)}).$$
Now the desired result follows from the definition of  $\GG^{\ab}$, $\GG^{\bb}$, and $\varphi^{\ab,\bb}$.
\end{proof}

\medskip
Let $M$ be a finitely generated multigraded $S$-module, and  suppose that
\[
\FF\: 0\to F_p \overset{\varphi_p}{\to} F_{p-1}\to \cdots \to F_1 \overset{\varphi_1}{\to} F_0\to 0
\]
is a minimal multigraded free resolution of $M$ with $F_i=\bigoplus_j S(-\ab_{ij})$. By Theorem~\ref{exact} we obtain the acyclic complex
\[
\FF^*\: 0\to F_p^*\overset{\varphi_p^*}{\to} F_{p-1}^*\to \cdots \to F_1^*\overset{\varphi_1^*}{\to} F_0^*\to 0.
\]
Now by Proposition \ref{minimality2} the complex $\GG_i=\bigoplus_j \GG^{\ab_{ij}}$ is a minimal multigraded free resolution of   $F_i^*=\bigoplus_j (\xb^{\ab_{ij}})^*$.
For each $i$ if $[\lambda_{\ell k}]$ is the monomial expression of $\varphi_i$, then we set $d:\bigoplus_j \GG^{\ab_{ij}} \to \bigoplus_j \GG^{\ab_{(i-1)j}}$ to be the complex homomorphism whose restriction to $\GG^{\ab_{ik}}$ and $\GG^{\ab_{(i-1),\ell}}$ is $\lambda_{\ell k}\varphi^{\ab_{ik},\ab_{(i-1),\ell}}$. Thus $d$ is a lifting of $\varphi_i^*$ and moreover by  Lemma~\ref{composition} we have a double complex $\CC$ with $C_{ij}=G_{ij}$, namely the module in homological degree $j$ of the complex $\GG_i$, whose column differentials are given by the differentials $\partial$ of each $\GG_i$ and row differentials are given by $d$.

\medskip

\[
\begin{CD}
0 \to @. F_p^*  @>\varphi_p^*>>  F_{p-1}^*     @. \;\to \ldots \to\;  @.   F_1^*   @>\varphi_1^*>> F_0^*     @. \to 0\\
@.       @AAA                    @AAA                 @.              @AAA                  @AAA       @.  \\
      @. G_{p0}   @>d>>            G_{p-1\;0}   @. \;\to \ldots \to\;  @.   G_{10}   @>d>>            G_{00}   @. \\
@.       @AA\partial A                    @AA\partial A                 @.              @AA\partial A                  @AA\partial A       @.  \\
      @. G_{p1}   @>d>>            G_{p-1\;1}   @. \;\to \ldots \to\;  @.   G_{11}   @>d>>            G_{01}   @. \\
@.       @AA\partial A                    @AA\partial A                 @.              @AA\partial A                  @AA\partial A       @.  \\
      @. G_{p2}   @>d>>            G_{p-1\;2}   @. \;\to \ldots \to\;  @.   G_{12}   @>d>>            G_{02}   @. \\
@.       @AA\partial A                    @AA\partial A                 @.              @AA\partial A                  @AA\partial A       @.  \\
\end{CD}
\]
\medskip
\medskip

We have the following result:
\begin{Theorem}
\label{maybe2}
The total complex $T(\CC)$ of  $\CC$ is a minimal multigraded free resolution of $M^*$.
\end{Theorem}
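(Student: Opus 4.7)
The plan is to verify three properties of $T(\mathcal{C})$: that it consists of free multigraded $S^*$-modules, that it is acyclic with $H_0\cong M^*$, and that all of its differentials have entries in the graded maximal ideal $\mathfrak{m}^*$. Freeness is immediate: each $C_{ij}=G_{ij}$ is a term in the minimal free resolution $\mathcal{G}_i=\bigoplus_j\mathcal{G}^{\mathbf{a}_{ij}}$ supplied by Proposition~\ref{minimality2}, so every term of the total complex is a finite direct sum of shifted copies of $S^*$. Before anything else, I would record that $\mathcal{C}$ really is a double complex: the vertical differentials $\partial$ come from the chain complexes $\mathcal{G}_i$, the horizontal $d$ is a chain map by Proposition~\ref{lifting}, and $d\circ d=0$ follows from $\varphi_{i-1}\varphi_i=0$ together with the composition law $\varphi^{\mathbf{b},\mathbf{c}}\circ\varphi^{\mathbf{a},\mathbf{b}}=\varphi^{\mathbf{a},\mathbf{c}}$ established in Lemma~\ref{composition}. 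A standard sign twist makes $\partial$ and $d$ anticommute in $T(\mathcal{C})$.

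For acyclicity and the computation of $H_0$ I would use the spectral sequence of the double complex obtained by first taking homology in the vertical direction. Since each column $\mathcal{G}_i$ is a minimal free resolution of $F_i^*$ (Proposition~\ref{minimality2}), the $E_1$-page has $E_1^{i,0}=F_i^*$ and $E_1^{i,j}=0$ for $j>0$; because $d$ lifts $\varphi_i^*$, the horizontal differential induced on $E_1^{*,0}$ is exactly $\varphi_i^*$. The row $E_1^{*,0}$ is therefore the complex $\mathcal{F}^*$, which is acyclic with $H_0=M^*$ by Theorem~\ref{exact}. The spectral sequence degenerates on $E_2$, yielding $H_n(T(\mathcal{C}))=0$ for $n\geq 1$ and $H_0(T(\mathcal{C}))\cong M^*$. (One could equivalently argue by iterated mapping cones, writing $T(\mathcal{C})$ as the cone of $\mathcal{G}_i\to T(\mathcal{C}_{<i})$ and lifting the known quasi-isomorphisms, but the spectral sequence is cleaner.)

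It remains to check minimality, i.e.\ that both differentials land in $\mathfrak{m}^*T(\mathcal{C})$. The vertical differentials $\partial$ are minimal because each $\mathcal{G}^{\mathbf{a}_{ij}}$ is a minimal free resolution by Proposition~\ref{minimality2}. For the horizontal differentials $d$, the restriction to a component $\mathcal{G}^{\mathbf{a}_{ik}}\to\mathcal{G}^{\mathbf{a}_{(i-1)\ell}}$ is $\lambda_{\ell k}\,\varphi^{\mathbf{a}_{ik},\mathbf{a}_{(i-1)\ell}}$. If $\lambda_{\ell k}=0$ the map is trivially minimal; if $\lambda_{\ell k}\neq 0$, then the minimality of $\mathcal{F}$ forces $\mathbf{x}^{\mathbf{a}_{ik}-\mathbf{a}_{(i-1)\ell}}$ to be a nonconstant monomial, so $a_{ik}(t)>a_{(i-1)\ell}(t)$ for some $t$, and Lemma~\ref{minimality} gives $\varphi^{\mathbf{a}_{ik},\mathbf{a}_{(i-1)\ell}}(G_s^{\mathbf{a}_{ik}})\subseteq\mathfrak{m}^*G_s^{\mathbf{a}_{(i-1)\ell}}$. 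Hence every matrix entry of $d$ lies in $\mathfrak{m}^*$, and $T(\mathcal{C})$ is a minimal multigraded free resolution of $M^*$.

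The main obstacle I expect is purely bookkeeping: checking that $\mathcal{C}$ is an honest double complex (requiring both the chain-map property from Proposition~\ref{lifting} and the composition law from Lemma~\ref{composition}) and then verifying that the $E_2$-page of the spectral sequence matches $\mathcal{F}^*$ without a degree shift. None of the individual steps is deep; once these compatibilities are in place, the exactness of the expansion functor (Theorem~\ref{exact}) and the minimality lemma (Lemma~\ref{minimality}) furnish everything needed.
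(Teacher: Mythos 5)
Your proof is correct and follows essentially the same route as the paper's: the column-filtration spectral sequence of the double complex $\CC$ to establish acyclicity with $H_0\cong M^*$, combined with Proposition~\ref{minimality2} for the vertical and Lemma~\ref{minimality} for the horizontal differentials to establish minimality. You spell out a couple of points the paper treats implicitly — that $d\circ d=0$ rests on Lemma~\ref{composition}, and that the hypothesis $b(t)<a(t)$ of Lemma~\ref{minimality} is met precisely because $\FF$ is minimal — but the argument is the same.
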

\begin{proof}
By Theorem~\ref{exact}  the following  sequence of multigraded modules is acyclic
\[
\FF^*\: 0\to F_p^*\overset{\varphi_p^*}{\to} F_{p-1}^*\to \cdots \to F_1^*\overset{\varphi_1^*}{\to} F_0^*\to 0,
\]
and $H_0(\FF^*)\cong M^*$. For each $i$ the complex $\GG_i=\bigoplus_j \GG^{\ab_{ij}}$ is a minimal multigraded free resolution of   $F_i^*$ because each $\GG^{\ab_{ij}}$ is a minimal free resolution of   $(\xb^{\ab_{ij}})^*$, as we know by Proposition~\ref{minimality2}.

We compute the spectral sequences  with respect to the column   filtration  of the double complex $\CC$. So $E_{r,s}^1=H_s^v(\CC_{r,\lpnt})=H_s(\GG_r)$. Since each $\GG_r$ is an acyclic sequence with $H_0(\GG_r)\cong F_r^*$, one has
$E_{r,s}^1=0$ if $s\neq 0$, and $E_{r,0}^1\cong F_r^*$.
 Next we consider $E_{r,s}^2=H_r^h(H_s^v(\CC))$. So $E_{r,s}^2=0$ if  $s\neq 0$, and  $E_{r,0}^2=H_r(\FF^*)$. By Theorem~\ref{exact} we have $H_r(\FF^*)=0$ for $r\neq 0$ and $H_0(\FF^*)=M^*$. Thus we see that $E_{r,s}^2=0$ for $(r,s)\neq (0,0)$ and  $E_{0,0}^2=M^*$. This implies that the total complex  $T(\CC)$ of $\CC$ is a multigraded free resolution of $M^*$; see for example \cite[Proposition 10.17]{R}.

Since each $\GG_i$ is a minimal free resolution of $F_i^*$, and moreover by   Lemma~\ref{minimality} each component of $d$ is also minimal, we conclude that the free resolution $T(\CC)$ of $M^*$ is minimal.
\end{proof}

\section{Homological properties of $M^*$}
This section concerns some homological properties of the expansion of a finitely generated multigraded $S$-module $M$  with respect to a fixed $n$-tuple $(i_1,\ldots,i_n)$.
We call $$\P_M(t)=\sum_{j=0}^p\beta_jt^j$$ the {\em Betti polynomial of   $M$} where $\beta_j$ is the $j$-th Betti number of $M$ and $p$ is its projective dimension.

First we need to have the Betti numbers of $I^*$ for the case that $I=(\xb^\ab)$ is a principal ideal, and hence $I^*$ is a product of powers of monomial prime ideals in pairwise distinct sets of variables.  We have the following fact:
\begin{Proposition}
\label{principal}
Let $I=(\xb^\ab)$ with $\ab=(a_1,\ldots,a_n)$. Then  $I^*$ has a linear resolution with
\[
\P_{I^*}(t)=\prod_{j\in \supp(\ab)}P_j(t),
\]
where
\[
P_j(t)=\sum_{i=0}^{i_j-1} {i_j+a_j-1\choose i_j-i-1}{a_j+i-1\choose i}t^i
\]
In particular, $\projdim I^*=\sum_{j\in \supp(\ab)}(i_j-1)$.
\end{Proposition}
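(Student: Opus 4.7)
The plan is to exploit Proposition~\ref{minimality2}: the complex $\GG^{\ab}=\bigotimes_{j=1}^{n}\GG(P_j^{a(j)})$ is a minimal multigraded free resolution of $I^*$, and the factors with $j\notin\supp(\ab)$ are trivial, so effectively the tensor runs over $j\in\supp(\ab)$.

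First I would verify that $I^*$ has a linear resolution. By the description of $\GG(P_j^{a(j)})$ recalled before Lemma~\ref{dec.map}, its module in homological degree $i$ has a multigraded basis consisting of symbols $f(\sigma;u)$ of total degree $|\sigma|+\deg u=i+a_j$, so each $\GG(P_j^{a(j)})$ is a linear complex. Since the factors live in pairwise disjoint sets of variables, degrees simply add across the tensor product, and $I^*$ is generated in degree $|\ab|$; hence $\GG^{\ab}$ is linear.

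Since Betti polynomials are multiplicative on tensor products of complexes, it suffices to identify each factor $\P_{P_j^{a(j)}}(t)$ with the claimed $P_j(t)$. From the basis description above,
$$\beta_i\bigl(P_j^{a(j)}\bigr)=\sum_{u\in G(P_j^{a(j)})}\binom{m(u)-1}{i},$$
since for each $u$ the cardinality-$i$ subsets of $\set(u)=\{1,\dots,m(u)-1\}$ number $\binom{m(u)-1}{i}$. The generators $u$ of $P_j^{a(j)}$ with $m(u)=k$ correspond bijectively (via $u\mapsto u/x_{j,k}$) to monomials of degree $a_j-1$ in $x_{j,1},\dots,x_{j,k}$, and there are $\binom{a_j+k-2}{k-1}$ such monomials. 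Grouping by $k$,
$$\beta_i\bigl(P_j^{a(j)}\bigr)=\sum_{k=1}^{i_j}\binom{k-1}{i}\binom{a_j+k-2}{k-1}.$$

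The main remaining step is the binomial identity
$$\sum_{k=1}^{m}\binom{k-1}{i}\binom{a+k-2}{k-1}=\binom{m+a-1}{m-i-1}\binom{a+i-1}{i}$$
with $m=i_j$, $a=a_j$. I would prove this by induction on $m$: applying Pascal's rule to the right-hand side, the inductive step reduces to the one-line identity $\binom{m}{i}\binom{a+m-1}{m}=\binom{a+i-1}{i}\binom{m+a-1}{m-i}$, which is immediate by factorial expansion. This is the only calculation in the whole argument and is the unique obstacle between the resolution setup and the stated formula. The projective dimension claim then follows from $\projdim I^*=\deg\P_{I^*}(t)=\sum_{j\in\supp(\ab)}\deg P_j(t)=\sum_{j\in\supp(\ab)}(i_j-1)$, the leading coefficient of $P_j(t)$ being $\binom{a_j+i_j-2}{i_j-1}\neq 0$.
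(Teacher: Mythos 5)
Your proof is correct, and it takes a genuinely different route from the paper's. Both arguments reduce, via the tensor decomposition $\GG^{\ab}=\bigotimes_{j}\GG(P_j^{a_j})$ of Proposition~\ref{minimality2} (resting on Lemma~\ref{product}), to computing the Betti numbers of a single power $(y_1,\dots,y_r)^s$ of a monomial prime, and both deduce linearity and the product formula for $\P_{I^*}$ from the disjointness of the variable sets across factors. The divergence is in that remaining computation: the paper cites the Eagon--Northcott complex (or, alternatively, combines Cohen--Macaulayness and linearity of $(y_1,\dots,y_r)^s$ due to Conca--Herzog with the Herzog--K\"uhl formula for pure resolutions) to read off $\beta_i=\binom{r+s-1}{r-i-1}\binom{s+i-1}{i}$, whereas you obtain the same number internally by counting the basis elements $f(\sigma;u)$ with $|\sigma|=i$ in the linear-quotients resolution of $P^s$ already set up in Section~3, grouping generators by $m(u)$, and closing with the binomial identity
\[
\sum_{k=1}^{r}\binom{k-1}{i}\binom{s+k-2}{k-1}=\binom{r+s-1}{r-i-1}\binom{s+i-1}{i},
\]
proved by induction on $r$. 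Your inductive step is sound: Pascal's rule applied to the right-hand side leaves $\binom{r+s-1}{r-i}\binom{s+i-1}{i}$, and the factorial identity $\binom{r}{i}\binom{s+r-1}{r}=\binom{s+i-1}{i}\binom{r+s-1}{r-i}$ matches this to the newly added summand; the base case $r\le i$ is $0=0$. Your route is more self-contained, relying only on the resolution $\GG(P^s)$ the paper has already constructed plus an elementary identity; the paper's proof is shorter on the page but imports two nontrivial results from the literature.
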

\begin{proof}
By Lemma \ref{product} it is enough to compute the Betti numbers for powers of monomial prime ideals.
 So let $R=K[y_1,\ldots,y_r]$ be a polynomial ring and consider $J=(y_1,\ldots,y_r)^s$ for some positive integer $s$. Then Eagon-Northcatt complex resolving $J$ gives the Betti numbers; see for example~\cite{BV}. Alternatively one can use the Herzog-K\"{u}hl formula to obtain the Betti numbers \cite[Theorem 1]{HK}. In fact, first observe that  $J$ is Cohen-Macaulay and it has  linear resolution; see \cite[Theorem 3.1]{CH}. Next by Auslander-Buchsbaum formula, one has $\projdim (J)= r-1$. Therefore, by Herzog-K\"{u}hl formula one obtains that
\begin{eqnarray*}
\beta_j(J)&=&\frac {s (s+1)\cdots (s+j-1)\widehat{(s+j)}(s+j+1)  \cdots  (s+r-1)}{j! \times (r-j-1)!}\vspace{0.3cm}\\
          &=&\frac{(s+r-1)!}{(r-j-1)!(s+j)!}\times \frac{(s+j-1)!}{j!(s-1)!}\vspace{2cm}\\
          &&\\
          &=&{r+s-1\choose r-j-1}{s+j-1\choose j}.
\end{eqnarray*}
\end{proof}

\begin{Theorem}
\label{maybe}
Let $M$ be a finitely generated multigraded $S$-module  with the minimal multigraded free resolution
\[
\FF\: 0\to F_p \overset{\varphi_p}{\to} F_{p-1}\to \cdots \to F_1 \overset{\varphi_1}{\to} F_0\to 0,
\]
where $F_i=\Dirsum_j S(-\ab_{ij})$ for $i=0,\ldots,p$. Let $M^*$ be the expansion of $M$ with respect to $(i_1,i_2,\ldots,i_n)$.
Then
\[
\beta_{jk}(M^*)= \sum_{i=0}^p \beta_{j-i, k}(F_{
i}^*) \quad \text{for all $j$ and $k$,}
\]
Moreover,  $\reg(M)=\reg(M^*)$ and
\[
\projdim M^*=\max_{i,j}\{i+\sum_{k\in \supp(\ab_{ij})}(i_k-1)\}.
\]
\end{Theorem}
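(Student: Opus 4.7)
The plan is to read all three statements directly off the minimal multigraded free resolution $T(\CC)$ of $M^*$ produced in Theorem~\ref{maybe2}; no new construction is needed. Everything reduces to bookkeeping inside the double complex $\CC$ together with the linear-resolution and projective-dimension information supplied by Proposition~\ref{principal}.

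First I would derive the Betti formula. By construction, column $i$ of $\CC$ is the minimal multigraded free resolution $\GG_i$ of $F_i^*$, so the $j$-th module of $T(\CC)$ is $\Dirsum_{i+s=j}G_{i,s}$, where $G_{i,s}$ is the free $S^*$-module in homological degree $s$ of $\GG_i$. Since $T(\CC)$ is \emph{minimal} by Theorem~\ref{maybe2}, the Betti numbers of $M^*$ coincide with the multigraded ranks of these modules, and splitting the sum over $i$ gives
\[
\beta_{j,k}(M^*)=\sum_{i=0}^p \beta_{j-i,k}(F_i^*)
\]
immediately.

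Next I would compute $\reg(M^*)$ and $\projdim M^*$ by feeding Proposition~\ref{principal} into this formula. Writing $F_i^*=\Dirsum_{j'}(\xb^{\ab_{ij'}})^*$, Proposition~\ref{principal} says that each summand has a linear resolution generated in total degree $|\ab_{ij'}|$, of projective dimension $\sum_{k\in\supp(\ab_{ij'})}(i_k-1)$. Linearity forces every nonzero multigraded $\beta_{s,k}(F_i^*)$ to satisfy $k-s=|\ab_{ij'}|$ for some $j'$, so each nonzero $\beta_{j,k}(M^*)$ gives $k-j=|\ab_{ij'}|-i$ for some shift $\ab_{ij'}$ of $\FF$; taking the max over $(i,j')$ recovers $\reg(M^*)=\reg(M)$. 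Similarly, the maximum homological degree occurring in $T(\CC)$ equals $\max_{i,j'}\bigl(i+\projdim(\xb^{\ab_{ij'}})^*\bigr)$, and substituting Proposition~\ref{principal} yields the stated expression for $\projdim M^*$.

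The one point requiring a bit of care, and the main conceptual obstacle, is ensuring that each claimed maximum is actually \emph{attained} rather than merely an upper bound: one must rule out the possibility that the extremal Betti number of some $F_i^*$ is killed by a horizontal differential in $\CC$. But this is exactly what the minimality clause of Theorem~\ref{maybe2} guarantees — no cancellation occurs when passing from the double complex to its total complex — so the degree-$|\ab_{ij'}|$ generator in the $(i,0)$-spot and the top-homological generator at the top of column $i$ both survive as genuine generators of $M^*$. Everything else is routine arithmetic with the data already in hand.
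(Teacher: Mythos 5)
Your proof is correct and follows essentially the same route as the paper's: read everything off the minimal multigraded free resolution $T(\CC)$ of $M^*$ from Theorem~\ref{maybe2}, using minimality to identify Betti numbers with ranks and Proposition~\ref{principal} for the linearity and projective dimension of each $(\xb^{\ab_{ij}})^*$. One small remark: the ``killed by a horizontal differential'' worry in your last paragraph is not really a live concern — once Theorem~\ref{maybe2} asserts $T(\CC)$ is \emph{minimal}, the multigraded Betti numbers are literally the multigraded ranks of the total complex's terms, so every generator of every $G_{i,s}$ automatically contributes; no separate surviving argument is needed beyond observing that $G_{i,0}$ and the top of column $i$ are nonzero. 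Your attainment step is correct, just phrased as if it were defending against a danger that minimality has already precluded.
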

\begin{proof}
 By Theorem \ref{maybe2} we have
 \[
\beta_{jk}(M^*)= \sum_{i=0}^p \beta_{j-i, k}(F_{
i}^*) \quad \text{for all $j$ and $k$.}
\]

Next we show that $\reg(M)=\reg(M^*)$.
If $\ab=(a_1,\ldots,a_n)\in \mathbb{N}^n$ is a vector, we denote $\sum_{i=1}^n a_i$ by $|a|$. For each $F_i=\bigoplus_{j=1}^{\beta_i} S(-\ab_{ij})$ we choose $\ab_{i\ell_i}$ such that
$$|\ab_{i\ell_i}|=\max\{|\ab_{ij}|\: j=1,\ldots,\beta_i\}.$$
Then $\reg(M)=\max \{|\ab_{i\ell_i}|-i\: i=0,\ldots,p\}$.

On the other hand,  $\GG_i=\bigoplus_{j=1}^{\beta_i} \GG^{\ab_{ij}}$, and $\GG^{\ab_{ij}}$ is a $|\ab_{ij}|$-linear resolution of $(\xb^{\ab_{ij}})^*$. Hence if   for a fixed $j$ we set $r_j=\max\{k-j\: \beta_{jk}(M^*)\neq 0\}$, then
\begin{eqnarray*}
r_j&=&\max\{(|\ab_{i\ell_i}|+(j-i))-j\: i=0,\ldots,j\}\\
&=& \max\{|\ab_{i\ell_i}|-i\: i=0,\ldots,j\},
\end{eqnarray*}
and $\reg(M^*)=\max\{r_j\: j=0,\ldots,p\}$. This yields $\reg(M)=\reg(M^*)$.

By Theorem \ref{maybe2} the complex $T(\CC)$ is a minimal free resolution of $M^*$. Therefore,
\[
\projdim M^*=\max_{i} \{i+\projdim F_i^*:i=0,\ldots,p\}.
\]
On the other hand, since $F_i^*=\bigoplus_j (\xb^{\ab_{ij}})^*$, we conclude that  $\projdim F_i^*$ is the maximum number among the numbers $\projdim(\xb^{\ab_{ij}})^*$. Hence
\[
\projdim M^*=\max_{i,j}\{i+\projdim(\xb^{\ab_{ij}})^*\}
\]
Now by Proposition \ref{principal}

\[
\projdim M^*=\max_{i,j}\{i+\sum_{k\in \supp(\ab_{ij})}(i_k-1)\}.
\]
\end{proof}

As a result of Theorem \ref{maybe} we obtain the following:

\begin{Corollary}
\label{linear}
Let $M$ be a finitely generated multigraded $S$-module. Then $M$ has a $d$-linear resolution if and only if $M^*$ has a $d$-linear resolution;
\end{Corollary}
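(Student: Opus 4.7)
The plan is to combine the regularity equality $\reg(M)=\reg(M^*)$ from Theorem~\ref{maybe} with a check that generation in a single degree is preserved in both directions, since having a $d$-linear resolution is equivalent to being generated in degree $d$ together with $\reg=d$.

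First I would fix notation: write $\FF$ as in Theorem~\ref{maybe} with $F_i=\bigoplus_j S(-\ab_{ij})$ and recall that $M$ has a $d$-linear resolution precisely when $M$ is generated in degree $d$ and $\reg(M)=d$. In view of the equality $\reg(M)=\reg(M^*)$ already established in Theorem~\ref{maybe}, the only remaining point is the following claim: $M$ is generated in single degree $d$ if and only if $M^*$ is generated in single degree $d$.

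To prove the claim, I would use the Betti number formula of Theorem~\ref{maybe},
\[
\beta_{j,k}(M^*)=\sum_{i=0}^{p}\beta_{j-i,k}(F_i^*),
\]
specialized to $j=0$. Since $\beta_{j-i,k}(F_i^*)$ vanishes when $j-i<0$, this collapses to $\beta_{0,k}(M^*)=\beta_{0,k}(F_0^*)$. Now $F_0^*=\bigoplus_j (\xb^{\ab_{0j}})^*$, and by Proposition~\ref{principal} each $(\xb^{\ab_{0j}})^*$ has a $|\ab_{0j}|$-linear resolution, in particular is minimally generated in degree $|\ab_{0j}|$. Hence $\beta_{0,k}(F_0^*)\neq 0$ if and only if $k=|\ab_{0j}|$ for some $j$, i.e.\ the multiset of degrees in which $M^*$ is minimally generated equals $\{|\ab_{0j}|\}_j$, which is exactly the multiset of degrees in which $M$ is minimally generated. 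This proves the claim, and combining it with $\reg(M)=\reg(M^*)$ yields the corollary in both directions.

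The argument is essentially bookkeeping once Theorem~\ref{maybe} and Proposition~\ref{principal} are in hand; the only place where one must be slightly careful is to note that the sum for $\beta_{0,k}(M^*)$ really does collapse to a single term, so that the generation degrees of $M$ and $M^*$ match exactly rather than merely being comparable. No genuine obstacle is expected.
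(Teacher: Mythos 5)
Your proof is correct, and it follows essentially the same route the paper intends: the paper states the corollary as an immediate consequence of Theorem~\ref{maybe} without writing out the details, and your argument is the natural filling-in, combining $\reg(M)=\reg(M^*)$ with the observation that $\beta_{0,k}(M^*)=\beta_{0,k}(F_0^*)$ (via the collapse of the Betti formula at $j=0$) so that the generation degrees of $M$ and $M^*$ coincide.
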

%

\begin{Remark}
{\em In Theorem \ref{maybe}  the Betti numbers of $M^*$ are given in terms of the  Betti numbers of $S^*$-modules $F_i^*$. Each $F_i^*$ is a direct sum of ideals of the form $(\xb^{\ab})^*$. So one can apply Proposition~\ref{principal} to obtain a more explicit formula for  Betti numbers of $M^*$.}
\end{Remark}
\medskip

Let $M$ be a finitely generated multigraded $S$-module with a minimal multigraded free resolution
\[
\FF\: 0\to F_p \overset{\varphi_p}{\to} F_{p-1}\to \cdots \to F_1 \overset{\varphi_1}{\to} F_0\to 0,
\]
where $F_i=\Dirsum_j S(-\ab_{ij})$ for $i=0,\ldots,p$. We call a shift $\ab_{ij}$ in $\FF$ an {\em extremal multigraded shift of $M$} if for all multigraded shifts $\ab_{k\ell}$  in $\FF$ with $k> i$  the monomial $\xb^{\ab_{ij}}$ does not divide $\xb^{\ab_{k\ell}}$. Then by Theorem~\ref{maybe}
\[
\projdim M^*=\max\{i+\sum_{k\in \supp(\ab_{ij})}(i_k-1)\:\; \ab_{ij} \text{ is an extremal shift of $\FF$}\}.
\]
This is not always the case that the extremal shifts of $\FF$ only appear in $F_p$. For example, consider the ideal $I=(x_1x_4x_6,x_2x_4x_6,x_3x_4x_5,x_3x_4x_6)$ and its minimal multigraded free resolution $\FF$. Here for simplicity we write monomials rather than shifts:
\begin{eqnarray*}
\FF\:0 \to (x_1x_2x_3x_4x_6) &\to& (x_1x_2x_4x_6)\oplus(x_1x_3x_4x_6)\oplus(x_2x_3x_4x_6)\oplus(x_3x_4x_5x_6)\\
                             &\to&(x_1x_4x_6)\oplus (x_2x_4x_6)\oplus (x_3x_4x_5)\oplus (x_3x_4x_6)\to0.
\end{eqnarray*}
Then $(0,0,1,1,1,1)$ corresponding to the monomial $x_3x_4x_5x_6$ is an extremal multigraded shift of $\FF$. However, if $M$ is  Cohen-Macaulay, then the extremal multigraded shifts appear only in the last module of the minimal multigraded free resolution, otherwise there exists a  free direct summand in  $i$-th syzygy module of $M$ for some $i<\projdim M$. But by a result of Dutta this cannot happen; see~\cite[Corollary 1.2]{D}.

{}
\end{document}